\newcommand{\Rmnum}[1]{\expandafter\@slowromancap\romannumeral #1@}
\begin{document}
	
	\pagestyle{myheadings} \markright{\sc Packing of maximal independent mixed arborescences  \hfill} \thispagestyle{empty}
	
	\newtheorem{theorem}{Theorem}[section]
	\newtheorem{corollary}[theorem]{Corollary}
	\newtheorem{definition}[theorem]{Definition}
	\newtheorem{guess}[theorem]{Conjecture}
	\newtheorem{claim}[theorem]{Claim}
	\newtheorem{problem}[theorem]{Problem}
	\newtheorem{question}[theorem]{Question}
	\newtheorem{lemma}[theorem]{Lemma}
	\newtheorem{proposition}[theorem]{Proposition}
	\newtheorem{fact}[theorem]{Fact}
	\newtheorem{acknowledgement}[theorem]{Acknowledgement}
	\newtheorem{algorithm}[theorem]{Algorithm}
	\newtheorem{axiom}[theorem]{Axiom}
	\newtheorem{case}[theorem]{Case}
	\newtheorem{conclusion}[theorem]{Conclusion}
	\newtheorem{condition}[theorem]{Condition}
	\newtheorem{conjecture}[theorem]{Conjecture}
	\newtheorem{criterion}[theorem]{Criterion}
	\newtheorem{example}[theorem]{Example}
	\newtheorem{exercise}[theorem]{Exercise}
	\newtheorem{notation}[theorem]{Notation}
	\newtheorem{observation}[theorem]{Observation}
	\newtheorem{solution}[theorem]{Solution}
	\newtheorem{summary}[theorem]{Summary}
	
	\newtheorem{thm}[theorem]{Theorem}
	\newtheorem{prop}[theorem]{Proposition}
	\newtheorem{defn}[theorem]{Definition}

	\newtheorem{lem}[theorem]{Lemma}
	\newtheorem{con}[theorem]{Conjecture}
	\newtheorem{cor}[theorem]{Corollary}

	\newenvironment{proof}{\noindent {\bf
			Proof.}}{\rule{3mm}{3mm}\par\medskip}
	\newcommand{\remark}{\medskip\par\noindent {\bf Remark.~~}}
	\newcommand{\pp}{{\it p.}}
	\newcommand{\de}{\em}

	\newcommand{\g}{\mathrm{g}}

	\newcommand{\qf}{Q({\cal F},s)}
	\newcommand{\qff}{Q({\cal F}',s)}
	\newcommand{\qfff}{Q({\cal F}'',s)}
	\newcommand{\f}{{\cal F}}
	\newcommand{\ff}{{\cal F}'}
	\newcommand{\fff}{{\cal F}''}
	\newcommand{\fs}{{\cal F},s}
	\newcommand{\cs}{\chi'_s(G)}
	
	\newcommand{\G}{\Gamma}
	\newcommand{\wrt}{with respect to }
	\newcommand{\mad}{{\rm mad}}
	\newcommand{\col}{{\rm col}}
	\newcommand{\gcol}{{\rm gcol}}
	
	\newcommand*{\ch}{{\rm ch}}
	\newcommand*{\ra}{{\rm ran}}
	\newcommand{\co}{{\rm col}}
	\newcommand{\sco}{{\rm scol}}
	\newcommand{\wc}{{\rm wcol}}
	\newcommand{\dc}{{\rm dcol}}
	\newcommand*{\ar}{{\rm arb}}
	\newcommand*{\ma}{{\rm mad}}
	\newcommand{\di}{{\rm dist}}
	\newcommand{\tw}{{\rm tw}}
	\newcommand{\scol}{{\rm scol}}
	\newcommand{\wcol}{{\rm wcol}}
	\newcommand{\td}{{\rm td}}
	\newcommand{\edp}[2]{#1^{[\natural #2]}}
	\newcommand{\epp}[2]{#1^{\natural #2}}
	\newcommand*{\ind}{{\rm ind}}
	\newcommand{\red}[1]{\textcolor{red}{#1}}
	
	\def\C#1{|#1|}
	\def\E#1{|E(#1)|}
	\def\V#1{|V(#1)|}
	\def\iarb{\Upsilon}
	\def\ipac{\nu}
	\def\nul{\varnothing}

	\newcommand*{\QEDA}{\ensuremath{\blacksquare}}
	\newcommand*{\QEDB}{\hfill\ensuremath{\square}}



\title{Packing of maximal independent mixed arborescences}


\author{Hui Gao\\
\small Center for Discrete Mathematics,\\[-0.8ex]
\small Fuzhou University,\\[-0.8ex]
\small Fuzhou, Fujian 350108, China\\
\small\tt gaoh1118@yeah.net\\
\and
Daqing Yang\thanks{Corresponding author,  grant number NSFC  11871439.} \\
\small Department of Mathematics,\\[-0.8ex]
\small Zhejiang Normal University,\\[-0.8ex]
\small Jinhua, Zhejiang 321004, China\\
\small\tt dyang@zjnu.edu.cn\\
}


\maketitle

\begin{abstract} 
	Kir\'{a}ly in [On maximal independent arborescence packing, SIAM J. Discrete. Math. 30 (4) (2016), 2107-2114] solved the following packing problem: Given a digraph $D = (V, A)$, a matroid $M$ on a set $S = \{s_{1}, \ldots,s_{k} \}$ along with a map $\pi : S \rightarrow V$, find $k$ arc-disjoint maximal arborescences $T_{1}, \ldots ,T_{k}$ with roots $\pi(s_{1}), \ldots ,\pi(s_{k})$, such that, for any $v \in V$, the set $\{s_{i} : v \in V(T_{i})\}$ is independent and its rank reaches the theoretical maximum.  
	In this paper, we give a new characterization for packing of maximal independent mixed arborescences under matroid constraints. 
	This new characterization is simplified to the form of finding a supermodular function  that should be covered by an orientation of each strong component of a matroid-based rooted mixed graph. Our proofs come along with a polynomial-time algorithm. Note that our new characterization extends Kir\'{a}ly's result to mixed graphs, this answers a question that has already attracted some attentions.  

  \bigskip\noindent \textbf{Keywords:} Packing; Arborescence; Mixed graph; Matroid; Supermodularity 
\end{abstract}

{\em AMS subject classifications. 05C70, 05C40, 05B35}

\section{Introduction}

In this paper, we consider graphs which may have multiple edges or (and) arcs but not loops.
Let $D=(V,A)$ be a digraph. A subdigraph $T$ (it may not be spanning) of $D$ is called an \emph{$r$-arborescence} if its underlying graph is a tree and for any $u \in V(T)$, there is exactly one directed path in $T$ from $r$ to $u$. The vertex $r$ is called root of the arborescence $T$.
Edmonds' arborescence packing theorem \cite{edmonds} characterizes directed graphs that contain $k$ arc-disjoint
spanning arborescences in terms of a cut condition.

\begin{theorem}(\cite{edmonds})
In a digraph $D = (V,A)$, let $R = \{r_{1}, \ldots, r_{k} \} \subseteq V$ be a multiset. There exist arc-disjoint spanning $r_{i}$-arborescneces ($i=1, \ldots, k$) in $D$ if and only if for any $\emptyset \neq X \subseteq V$,
\[
d_{A}^{-}(X) \geq |\{r_{i}: r_{i} \notin X \}|.
\]
\end{theorem}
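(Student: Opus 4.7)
The plan is to prove both directions separately; necessity is a short counting argument, and sufficiency is handled by induction on $k$ via a one-arborescence peeling lemma.

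For necessity, suppose arc-disjoint spanning $r_i$-arborescences $T_1,\ldots,T_k$ exist, and fix $\emptyset\neq X\subseteq V$ together with some $v\in X$. For each $r_i\notin X$ the unique directed $r_i$-to-$v$ path in $T_i$ must contain at least one arc entering $X$, since the path starts outside $X$ and ends in $X$. These entering arcs are pairwise distinct across the indices by arc-disjointness, which directly yields $d_A^-(X)\geq |\{r_i:r_i\notin X\}|$.

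Sufficiency is the main content. I would reduce it to the following peeling lemma: if $(D,R)$ satisfies the cut condition, then there exists a spanning $r_1$-arborescence $T_1$ in $D$ such that $(D-A(T_1),R\setminus\{r_1\})$ still satisfies the cut condition. Iterating this lemma $k$ times produces the desired packing. To prove the lemma, I would grow $T_1$ one arc at a time starting from $V(T_1)=\{r_1\}$. Writing $U$ for the current vertex set of $T_1$ and $A'$ for the residual arc set, I need to select an arc $(u,v)\in A'$ with $u\in U$ and $v\notin U$ whose removal preserves the cut condition for $R\setminus\{r_1\}$.

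To analyze safety, call a nonempty $X\subseteq V$ \emph{tight} if $d_{A'}^-(X)=|\{r_i\in R\setminus\{r_1\}:r_i\notin X\}|$; then an arc is safe exactly when it does not enter any tight set. The submodularity of $d_{A'}^-$, combined with the modularity of the right-hand side as a function of $X$, implies that tight sets are closed under $\cap$ and $\cup$ (subject to mild nondegeneracy conditions), so the family of tight sets has a well-behaved lattice structure. The main obstacle is showing that a safe arc always exists: assuming the contrary, every arc from $U$ into $V\setminus U$ enters some tight set, and combining these witnesses through the submodular closure concentrates the obstruction into a single maximal tight set $X^*$. Comparing the cut condition for the full multiset $R$ applied to an appropriate set derived from $X^*$ against the tight equality (which uses $R\setminus\{r_1\}$) yields the contradiction, with the crucial one-unit of slack coming precisely from $r_1\in U$ being counted in $|R\setminus X^*|$ but not in $|(R\setminus\{r_1\})\setminus X^*|$. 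This guarantees a safe arc at every stage until $U=V$, completing the induction.
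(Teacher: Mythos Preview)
The paper does not prove this statement: it is quoted as Edmonds' classical theorem with a citation to \cite{edmonds} and no argument, serving only as background for the later generalizations. So there is no ``paper's own proof'' to compare against.

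Your sketch is the standard Lov\'asz peeling proof, and the overall architecture---necessity by counting entering arcs along root-to-vertex paths, sufficiency by inductively stripping off one spanning arborescence while preserving the cut condition for the remaining roots---is correct. One point deserves more care: the sentence ``combining these witnesses through the submodular closure concentrates the obstruction into a single maximal tight set $X^*$'' hides a real issue, since tight sets are closed under union and intersection only when they intersect, and the tight sets witnessing different blocked arcs need not pairwise intersect. The usual remedy is not to union all witnesses, but to fix a single maximal tight set $X^*$ with $U\not\subseteq X^*$, use the strict slack at $V\setminus U$ (coming from $r_1\in U$) to locate an arc from $U\setminus X^*$ into $(V\setminus U)\setminus X^*$, and then show that any tight set entered by that arc would, together with $X^*$, contradict the maximality of $X^*$. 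Your final paragraph gestures at the right one-unit-of-slack idea but does not pin down which set the original cut condition is applied to; filling this in is routine but should be made explicit.
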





\emph{A mixed graph} $F = (V ; E ,A)$ is a graph consisting of the set $E$ of undirected edges and the set $A$ of directed arcs.
By regarding each undirected edge as a directed arc in both directions, each concept in directed graphs can be naturally extended for mixed graphs. Especially, a subdigraph $P$ of $F$ is a {\em mixed path} if its underlying graph is a path and one end of $P$ can be reached from the other. A subdigraph $T$ (it may not be spanning) of $F$ is called an \emph{$r$-mixed arborescence} if its underlying graph is a tree and for any $u \in V(T)$, there is exactly one mixed path in $T$ from $r$ to $u$. Equivalently, a subgraph $T$ of $F$ is an $r$-mixed arborescence if there exists an orientation of the undirected edges of $T$ such that the obtained subgraph (whose arc set is the union of original arc set and oriented arc set of $T$) is an $r$-arborescence.

Let $X_{1}, \ldots, X_{t}$ be disjoint subsets of $V$; we call $\mathcal{P}= \{X_{1}, \ldots, X_{t} \}$ a {\em subpartition (of $V$)} and particularly a {\em partition of  $V$} if $V=\cup_{j=1}^{t}X_{j}$. For a subpartition $\mathcal{P}$ of  $V$, denote
$e_{E}(\mathcal{P})= |\{e \in E: e \mbox{ connects distinct } X_{i}\mbox{s}  \mbox{ in } \mathcal{P}  \mbox{ or connects some } X_{i} \mbox{ and } V \setminus \cup_{j=1}^{t}X_{j} \}|.$



For nonempty $X,Z \subseteq V$, let $E(X,Z)$ and $A(X,Z)$ denote the set of edges with one endvertex in $X$ and the other in $Z$ and the set of arcs from $X$ to $Z$ respectively. For simplicity, denote $E(X)=E(X,X)$ and $A(X)=A(X,X)$.
Let $Z \rightarrow X$ denote that $X$ and $Z$ are disjoint and $X$ is {\em reachable} from $Z$, that is, there is a mixed path from $Z$ to $X$.
We shall  write $v$ for $\{v\}$ for simplicity.
Let $W(X):= X \cup \{v \in V \setminus X : v \rightarrow X \}$.

Let $R=\{r_{1}, \ldots, r_{k}\} \subseteq V$ be a specified multiset.
Let $U_{i}$ be the set of vertices reachable from $r_{i}$. For $u,v \in V$,  we say $u \sim v$ if $\{i : u \in U_{i} \} = \{i : v \in U_{i} \}$; this $\sim$ is an equivalent relation. Denote equivalent  classes for $\sim$ by $\Gamma_{1},\ldots, \Gamma_{t} $, and we call each $\Gamma_{j}$ an \emph{atom}.
An $r_{i}$-mixed arborescence $T_{i}$ is said to be \emph{maximal} if $ V(T_{i}) = U_{i}$ (i.e. it spans all the vertices that are reachable from $r_{i}$ in $F$).
A \emph{packing of maximal mixed arborescences w.r.t. $R=\{r_{1}, \ldots, r_{k}\}$} is a collection $\{T_{1},\ldots,T_{k} \}$  of mutually edge and arc-disjoint mixed arborescences such that $T_{i}$ has root $r_{i}$ and $ V(T_{i}) = U_{i}$.



The following remarkable extension of Edmonds' theorem (by Kamiyama, Katoh and Takizawa \cite{kamiyama}) enables us to find a packing of maximal arborescences   $\{T_{1},\ldots,T_{k} \}$ w.r.t. $R$  in a digraph (that is $E =\emptyset$).

\begin{theorem}(\cite{kamiyama}) \label{19}
In a digraph $D = (V,A)$, let $R= \{r_{1}, \ldots, r_{k}\} \subseteq V$ be a multiset. There are arc-disjoint maximal $r_{i}$-arborescences in $D$ for $i = 1,\ldots,k$ if and only if for any $\emptyset \neq X \subseteq V$,
\[
d_{A}^{-}(X) \geq |\{ r_{i}: r_{i} \in W(X) \setminus X  \} |.
\]
\end{theorem}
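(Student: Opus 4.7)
The necessity direction I would handle first as a short counting argument: given arc-disjoint maximal arborescences $T_1,\ldots,T_k$ and any nonempty $X\subseteq V$, every root $r_i\in W(X)\setminus X$ can reach some vertex of $X$ while lying outside $X$, so $T_i$ must contain at least one arc of $A(V\setminus X,X)$; arc-disjointness of the $T_i$'s then yields the stated inequality.

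For sufficiency, my plan is induction on $|A|$, with $\sum_i |U_i|$ as a secondary parameter to break ties. The inductive step would rest on a \emph{safe extension lemma}: under the assumed cut condition, one can always locate an arc $a=(u,v)\in A$ and an index $i_0$ with $u,v\in U_{i_0}$ such that assigning $a$ to $T_{i_0}$ leaves a strictly smaller residual instance—obtained by deleting $a$ and, once the partial arborescence has reached $v$, effectively treating $v$ as a new root for $T_{i_0}$—that still satisfies the cut inequality. Iterating the lemma builds up the packing, and the maximality of each resulting $T_i$ would follow at the end by applying the cut inequality to $X=U_i$ itself, which forces enough arcs into $U_i$ for $T_i$ to reach every vertex of $U_i$.

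The main obstacle is establishing the safe extension lemma. I plan to study the family $\mathcal{T}$ of \emph{tight} sets, namely those $X$ for which equality holds in the cut inequality, and run an uncrossing argument based on the submodularity of $X\mapsto d_A^-(X)$. The subtle point is that the right-hand side $X\mapsto|\{r_i:r_i\in W(X)\setminus X\}|$ is not automatically sub- or supermodular, so I would have to compare $W(X\cap Y)$ and $W(X\cup Y)$ to $W(X)$ and $W(Y)$ using the atom DAG structure and show that the combined deficit function admits uncrossing, so that $\mathcal{T}$ is closed under appropriate intersections and unions. A minimal tight set containing a chosen vertex then pins down a specific arc-root pair whose commitment provably preserves the cut condition for every other set. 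Because each step (minimizing the deficit function, locating the tight set, selecting the arc) can be implemented via polynomial-time submodular function minimization, the same induction simultaneously yields the polynomial-time algorithm consistent with the paper's broader claims.
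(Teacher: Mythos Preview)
The paper does not prove Theorem~\ref{19}. It is quoted as a known result of Kamiyama, Katoh and Takizawa \cite{kamiyama} and used as background; the paper's own contributions begin with Theorem~\ref{mainresult} and its proof in Section~2. So there is no ``paper's own proof'' of this statement to compare your attempt against.

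That said, a brief comment on your sketch. The necessity argument is fine. For sufficiency, your outline (induction, tight-set analysis, uncrossing) is in the spirit of the original proof in \cite{kamiyama}, but as written it is only a plan, not a proof: the crucial step you flag yourself---that the function $X\mapsto |\{r_i:r_i\in W(X)\setminus X\}|$ interacts well enough with submodularity of $d_A^-$ to permit uncrossing of tight sets---is exactly the nontrivial content of the theorem, and you have not supplied it. In particular, $W(X\cap Y)$ need not equal $W(X)\cap W(Y)$, so one cannot simply declare the deficit function intersecting-supermodular; the original argument works atom by atom (or, in later treatments such as \cite{kiraly-szigeti-tanigawa-18}, via bi-set functions) precisely to sidestep this. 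Your secondary induction parameter $\sum_i|U_i|$ is also inert as stated, since deleting an arc can only shrink the $U_i$'s, so it does not help break ties in the way you suggest. If you intend to include a proof here rather than cite \cite{kamiyama}, you will need to actually carry out the atom-based uncrossing or the bi-set reformulation rather than gesture at it.
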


%
%

A bi-set $Y =\{Y_{O}, Y_{I} \}$ is a pair of sets satisfying $Y_{I} \subseteq Y_{O} \subseteq V$. For bi-set $Y$, define $d_{A}^{-}(Y) =|\{ uv \in A : u \in V \setminus Y_{O} ,v \in Y_{I} \}|$.
The application of bi-sets for arborescence packings
was first studied by B\'{e}rczi and Frank \cite{BF-1,BF-2}, see also \cite{berczi-16}.
Theorem \ref{19} was recently studied again by Kir\'{a}ly, Szigeti and Tanigawa \cite{kiraly-szigeti-tanigawa-18} by using a bi-set function;
Matsuoka and Tanigawa \cite{matsuoka} extended it to mixed graphs.

\begin{theorem}(\cite{matsuoka})\label{3}
Let $F = (V ;E,A)$ be a mixed graph, and $r_{1} ,\ldots,r_{k} \in V$.
Let $U_{i} \subseteq V$   $(i = 1,\ldots,k)$ be the set of vertices reachable from $r_{i}$ in $F$. Then, there exists a packing of $r_{i}$-mixed arborescences $(i = 1,\ldots,k)$ spanning $U_{i}$ in $F$ if and only if
\begin{equation}\label{4}
e_{E}(\mathcal{P})+ \sum_{q=1}^{t} d_{A}^{-}(X^{q}) \geq  \sum_{q=1}^{t} |\{ r_{i} : (X^{q} )_{I} \subseteq U_{i} \setminus \{r_{i} \}, ((X^{q})_{O} \setminus (X^{q} )_{I} ) \cap U_{i} = \emptyset\}|
\end{equation}
holds for every family of bi-sets $\{X^{1} ,\ldots,X^{t} \}$ such that $\mathcal{P} = \{(X^{1} )_{I} ,\ldots,(X^{t} )_{I} \}$ is a subpartition of some atom $\Gamma_{j}$ and that $((X^{q})_{O} \setminus (X^{q} )_{I}) \cap
\Gamma_{j}= \emptyset$ holds for $q = 1,\ldots,t$.
\end{theorem}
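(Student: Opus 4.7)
\medskip

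\noindent \textbf{Proof proposal.} The plan is to handle the two directions separately. Necessity is a direct counting argument; sufficiency I would reduce, via a mixed-graph orientation theorem, to the pure digraph case that is already settled by Theorem \ref{19}.

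\medskip

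\noindent \textbf{Necessity.} Fix a packing $\{T_1,\ldots,T_k\}$ and a family $\{X^1,\ldots,X^t\}$ of bi-sets as in the statement, with $\mathcal{P}=\{(X^1)_I,\ldots,(X^t)_I\}$ a subpartition of an atom $\Gamma_j$. For each $q$ let $I_q$ denote the set of indices $i$ counted on the right-hand side of (\ref{4}). For $i\in I_q$ one has $(X^q)_I\subseteq U_i\setminus\{r_i\}$ and $(X^q)_O\setminus (X^q)_I$ disjoint from $U_i$, so $r_i\notin (X^q)_O$, and the mixed arborescence $T_i$ must reach every vertex of $(X^q)_I$ from $r_i$ without using any vertex of $(X^q)_O\setminus (X^q)_I$. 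Hence $T_i$ contains at least one edge or arc with head in $(X^q)_I$ and tail in $V\setminus (X^q)_O$. Summing these ``entering'' contributions over all $q$ and $i\in I_q$, the edge- and arc-disjointness of the packing bounds the directed part by $\sum_q d_A^-(X^q)$, while the undirected part is bounded by $e_E(\mathcal{P})$: the constraints $(X^q)_I\subseteq\Gamma_j$ and $(X^q)_O\setminus (X^q)_I$ disjoint from $\Gamma_j$ force every such undirected edge either to join two blocks of $\mathcal{P}$ or to have its other endpoint outside $\cup_q (X^q)_I$, which is exactly what $e_E(\mathcal{P})$ counts.

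\medskip

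\noindent \textbf{Sufficiency.} I would try to produce an orientation $\vec{E}$ of $E$ so that the digraph $D'=(V,A\cup\vec{E})$ satisfies
\[
d^-_{A\cup\vec{E}}(X)\ \ge\ |\{r_i : r_i\in W_F(X)\setminus X\}| \qquad \text{for every nonempty } X\subseteq V,
\]
where $W_F$ is the reachability closure taken in $F$ (so that the sets $U_i$ do not change). Once such an orientation is in hand, Theorem \ref{19} applied to $D'$ with the multiset $\{r_1,\ldots,r_k\}$ yields arc-disjoint maximal $r_i$-arborescences in $D'$, whose underlying subgraphs in $F$ are the desired packing. Constructing the orientation is the content of a Frank-type mixed-graph orientation theorem: the right-hand side of (\ref{4}), viewed as a bi-set function and restricted to each atom $\Gamma_j$, plays the role of the positively intersecting supermodular target to be covered, and (\ref{4}) is precisely the sub-partition-type inequality characterizing orient-coverability.

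\medskip

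\noindent \textbf{Main obstacle.} The hard part is setting up and verifying the bi-set supermodularity framework on each atom. One must check that the bi-set function on the right-hand side of (\ref{4}), restricted to bi-sets $X$ with $X_I\subseteq\Gamma_j$ and $(X_O\setminus X_I)\cap\Gamma_j=\emptyset$, is positively intersecting supermodular, and that the uncrossing operations needed to pass from the sub-partition hypothesis to a pointwise covering condition preserve these structural restrictions. The asymmetry between $X_I$ (where the arborescence must reach) and $X_O\setminus X_I$ (through which it must not pass) is what forces the bi-set formulation rather than the set version of Theorem \ref{19}; this asymmetry, together with the atom structure, must be maintained through each uncrossing step, and confirming that working inside one atom at a time suffices will be the technical heart of the proof.
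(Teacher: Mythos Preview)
The paper does not give a self-contained proof of Theorem~\ref{3}; it is quoted from \cite{matsuoka}. What the paper does is derive only the \emph{sufficiency} direction, and it does so as a corollary of its main result, Theorem~\ref{mainresult}: one takes $M$ to be the free matroid on $S=\{1,\dots,k\}$ with $\pi(i)=r_i$, so that a maximal $M$-independent packing of mixed arborescences is exactly a packing of maximal mixed arborescences w.r.t.\ $\{r_1,\dots,r_k\}$, and then checks (the Fact stated right after Theorem~\ref{mainresult}) that hypothesis~(\ref{4}) implies hypothesis~(\ref{9}). Necessity is not argued in this paper at all.

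Your plan is therefore a genuinely different route. For sufficiency you bypass the matroid machinery entirely and argue directly: orient $E$ so that the resulting digraph $D'$ covers the reachability demand $X\mapsto|\{r_i:r_i\in W_F(X)\setminus X\}|$, and then apply Theorem~\ref{19}. This is essentially the strategy of the original source \cite{matsuoka}, not of the present paper. The paper's route has the advantage that Theorem~\ref{3} drops out almost for free once the more general Theorem~\ref{mainresult} is in hand, and simultaneously exhibits it as the free-matroid specialisation; your route is self-contained but requires a bi-set (positively intersecting supermodular) analogue of Frank's orientation theorem---the paper only states the ordinary set version, Theorem~\ref{7}---together with the supermodularity and atom-compatibility verifications you correctly flag as the technical heart. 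Your necessity sketch is sound and, again, is simply not treated in this paper.
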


Let $M$ be a matroid on a set $S$ with rank function $r_{M}$, and $\pi : S \rightarrow V$ be a (not necessarily injective) map.
We may think of $\pi$ as a placement of the elements of $S$ at vertices of $V$ and different
elements of $S$ may be placed at the same vertex.
For related definitions and properties of matroids, we refer to \cite{frank}. We say that the quadruple $(F,M,S,\pi)$ is a \emph{matroid-based rooted mixed graph} (or a \emph{matroid-based rooted digraph} if $E=\emptyset$).

The following definition was introduced by Katoh and Tanigawa \cite{Ka-Ta}.
 $\pi$ is called \emph{$M$-independent} if $\pi^{-1}(v)$ is independent in $M$ for each $v \in V$. For $X \subseteq V$, denote by $S_{X}$ the set $\pi^{-1} (X)$.
An \emph{$M$-based packing of mixed arborescences} is a set $\{T_{1},\ldots,T_{|S|} \}$ of pairwise edge and arc-disjoint mixed arborescences for which $T_{i}$ has root $\pi(s_{i} )$ for $i = 1,\ldots,|S|$ (where $S = \{s_1, \ldots , s_t\}$), and for each $v \in V$, the set $\{s_{j} \in S : v \in V (T_{j} ) \}$ is a base of $S$ (we also say that $s_{i}$ is the root of $T_{i}$).  Durand de Gevigney, Nguyen and Szigeti \cite{durand-13} involve Edmonds' Theorem with the above packing version.

\begin{theorem}(\cite{durand-13}) \label{8}
Let $(D,M,S,\pi)$ be a matroid-based rooted digraph. There exists an $M$-based packing of arborescences
in $(D,M,S,\pi)$ if and only if $\pi$ is $M$-independent and
\[
d_{D}^{-}(X) \geq r_{M}(S) - r_{M}(S_{X}),
\]
holds for every $\emptyset \neq  X \subseteq V$.
\end{theorem}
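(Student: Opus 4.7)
The plan is to handle necessity by a counting argument and sufficiency by induction on $|A|$, with supermodularity of an auxiliary set function doing the structural work.

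For necessity, suppose $\{T_i\}_{i=1}^{|S|}$ is an $M$-based packing. Any $s_i \in \pi^{-1}(v)$ has $T_i$ rooted at $v$, hence $v \in V(T_i)$, giving $\pi^{-1}(v) \subseteq \{s_i : v \in V(T_i)\}$; as the latter is a base of $M$, $\pi^{-1}(v)$ is independent, so $\pi$ is $M$-independent. For the cut inequality, fix a nonempty $X \subseteq V$, pick any $v \in X$, and set $B_v := \{s_i : v \in V(T_i)\}$. Since $|B_v \cap S_X| \le r_M(S_X)$, at least $r_M(S) - r_M(S_X)$ elements of $B_v$ have $\pi$-image outside $X$; for each such index $i$, the path in $T_i$ from $\pi(s_i)$ to $v$ uses at least one arc entering $X$, and arc-disjointness yields $d_D^-(X) \ge r_M(S) - r_M(S_X)$.

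For sufficiency, define $p(X) := r_M(S) - r_M(S_X)$ on $\emptyset \ne X \subseteq V$. Because $\pi^{-1}$ commutes with intersection and union on $V$, submodularity of $r_M$ yields supermodularity of $p$, and the hypothesis reads $d_D^-(X) \ge p(X)$. The induction is on $|A|$. If every $\pi^{-1}(v)$ is already a base of $M$, equivalently $p \equiv 0$, the singleton packing $T_i := (\{\pi(s_i)\}, \emptyset)$ settles the claim. Otherwise some $v$ has $p(\{v\}) > 0$, so an arc $a = uv$ enters $v$. The inductive step is to choose such an arc together with an $s_i \in S \setminus \pi^{-1}(v)$ for which $\pi^{-1}(v) + s_i$ is independent in $M$, then delete $a$ and redefine $\pi'(s_i) := v$. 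Provided the reduced instance still satisfies the hypothesis, induction provides a packing; prepending $a$ to the arborescence containing $s_i$ (whose new root is $v$) reinstates $\pi(s_i)$ as the root in $D$.

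The main obstacle is to show that such a pair $(a, s_i)$ can always be chosen so as to preserve both hypotheses. This is the standard uncrossing argument. Call $X \subseteq V$ \emph{tight} when $d_D^-(X) = p(X)$; supermodularity of $p$ together with submodularity of $d_D^-$ force the family of tight sets containing any fixed vertex to be closed under intersection. Given $v$ with $p(\{v\}) > 0$, let $X^*$ be the minimal tight set containing $v$ (and $X^* = \{v\}$ if no tight set contains $v$, in which case there is strict slack everywhere around $v$). Pick an arc $a = uv$ with $u \notin X^*$ and an $s_i$ outside the $M$-closure of $\pi^{-1}(X^*)$; both are possible because $X^*$ is tight and $p(X^*) > 0$. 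An uncrossing verification then shows that for every $X$ the change in the slack $d_D^-(X) - p(X)$ caused by the deletion and the rerooting is nonnegative, separating into cases by whether $X$ contains $v$, contains $u$, or contains $\pi(s_i)$, and using the minimality of $X^*$ in each case. The entire construction is algorithmic: locating $X^*$ reduces to a max-flow computation and the matroidal closure test uses a single oracle call, which together yield the polynomial-time algorithm.
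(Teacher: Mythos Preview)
The paper does not prove Theorem~\ref{8}; it is quoted from \cite{durand-13} and used only as a black box (in the base case $\tau(F)=1$ of the main induction). There is therefore no proof in the paper to compare against, so let me comment on your argument on its own merits.

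Your outline is the standard inductive scheme for this result, and the necessity part is correct. In the sufficiency direction, however, the choice of the pair $(a,s_i)$ and the ``uncrossing verification'' you defer to do not work as stated.

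First, with $X^{*}$ defined as the minimal tight set containing $v$, note that $V$ itself is always tight ($d^{-}(V)=0=p(V)$), so your parenthetical ``$X^{*}=\{v\}$ if no tight set contains $v$'' never applies. When $X^{*}=V$ your assertion $p(X^{*})>0$ is false, and even when $p(X^{*})>0$ tightness only guarantees an arc entering $X^{*}$, not one with head $v$. The standard repair is to choose the arc $a=uv$ first and take $X^{*}$ to be the minimal tight set \emph{entered by $a$}.

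Second, and more substantively, selecting $s_i$ outside $\mathrm{cl}(S_{X^{*}})$ does not control the slack on the sets that actually matter. For a tight $X$ with $v\in X$, $u\notin X$ you need $s_i\notin\mathrm{cl}(S_X)$ so that $p$ drops with $d^{-}$; but minimality gives $X\supseteq X^{*}$, hence $\mathrm{cl}(S_X)\supseteq\mathrm{cl}(S_{X^{*}})$, and $s_i\notin\mathrm{cl}(S_{X^{*}})$ says nothing about the larger closure. Worse, for a tight $X$ with $\pi(s_i)\in X$ and $v\notin X$, deleting $a$ leaves $d^{-}(X)$ unchanged while moving $s_i$ to $v$ can raise $p(X)$ by $1$; such $X$ need not meet $X^{*}$ at all, so ``minimality of $X^{*}$'' gives no leverage. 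In the actual proof one has to argue that a suitable $s_i$ exists simultaneously avoiding the spans of \emph{all} tight sets entered by $a$ (via an intersection/union uncrossing that produces a single maximal such set), and the case $\pi(s_i)\in X$, $v\notin X$ is then handled by a separate supermodularity computation. That is precisely the missing idea in your sketch.
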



A \emph{maximal $M$-independent packing of mixed arborescences} is a set $\{T_{1},\ldots,T_{|S|} \}$ of pairwise edge and arc-disjoint mixed arborescences for which $T_{i}$ has root $\pi(s_{i} )$ for $i = 1,\ldots,|S|$, the set $\{s_{i} \in S : v \in V (T_{i} ) \}$ is independent in $M$, and $|\{s_{i} \in S : v \in V (T_{i} )\}| = r_{M} (S_{W(v)} )$ for each $v \in V$. And Kir\'{a}ly \cite{kiraly-16} characterized a digraph containing such a packing, extending both Theorem~\ref{19} and Theorem~\ref{8}.

\begin{theorem}(\cite{kiraly-16}) \label{1}
Let $(D=(V,A),M,S,\pi)$ be a matroid-based rooted digraph. There exists a maximal $M$-independent packing of arborescences in $(D,M,S,\pi)$ if and only if $\pi$ is $M$-independent and
\begin{equation}\label{2}
d_{A}^{-}(X) \geq r_{M}(S_{W(X)})-r_{M}(S_{X})
\end{equation}
holds for each $\emptyset \neq X \subseteq V$.
\end{theorem}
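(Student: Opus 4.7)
Necessity is a direct counting argument; sufficiency is the substantive content and I would approach it by reducing to the matroid-based spanning arborescence packing of Theorem~\ref{8} and inducting along the atom structure of the reachability equivalence~$\sim$.

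\textbf{Necessity.} Suppose $\{T_1, \ldots, T_{|S|}\}$ is a maximal $M$-independent packing and fix $\emptyset \neq X \subseteq V$. For $v \in V$ put $I_v := \{s_i : v \in V(T_i)\}$; by hypothesis $I_v$ is independent with $|I_v| = r_M(S_{W(v)})$, and $I_v \subseteq S_{W(v)}$ because $v \in V(T_i)$ forces $\pi(s_i) \in W(v)$. Hence $I_v$ is a base of $S_{W(v)}$. Set $I(X) := \bigcup_{v \in X} I_v \subseteq S_{W(X)}$. For any $s \in S_{W(X)}$, $\pi(s) \to v$ for some $v \in X$, so $s \in S_{W(v)}$ and, since $I_v$ is a base of $S_{W(v)}$, $s$ lies in the matroid span of $I_v \subseteq I(X)$. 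Thus $r_M(I(X)) = r_M(S_{W(X)})$, and there is a base $B \subseteq I(X)$ of $S_{W(X)}$. Each $s_i \in B \setminus S_X$ has $\pi(s_i) \notin X$ while $T_i$ meets $X$, so $T_i$ contains at least one arc entering $X$; by arc-disjointness these arcs are pairwise distinct, yielding $d_A^-(X) \geq |B \setminus S_X| \geq r_M(S_{W(X)}) - r_M(S_X)$. The $M$-independence of $\pi$ is immediate from $\pi^{-1}(v) \subseteq I_v$.

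\textbf{Sufficiency.} The plan is an inductive construction along the atoms $\Gamma_1, \ldots, \Gamma_t$ taken in a topological order in which no vertex of $\Gamma_j$ reaches any $\Gamma_{j'}$ with $j' < j$. Every $s_i$ has its root in exactly one atom $\Gamma_{j(i)}$, and maximality forces $V(T_i)$ to be $\Gamma_{j(i)}$ together with all later atoms reachable from it. Having inductively built arc-disjoint partial arborescences covering $\bigcup_{j' < j} \Gamma_{j'}$ and satisfying the $M$-independence condition there, one extends them into $\Gamma_j$ by solving an auxiliary matroid-based rooted packing problem whose vertex set is $\Gamma_j$ together with virtual roots corresponding to the arborescences that must enter $\Gamma_j$ from outside, whose arc set is the arcs induced on $\Gamma_j$ plus, for each arc from an earlier atom into $\Gamma_j$, one outgoing arc from the associated virtual root. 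For $\emptyset \neq Y \subseteq \Gamma_j$ the cut condition \eqref{2} collapses to a spanning-type inequality on this auxiliary instance, which matches the hypothesis of Theorem~\ref{8} after truncating $M$ to the roots relevant for $\Gamma_j$. Invoking Theorem~\ref{8} produces the extensions, which are concatenated with the partial arborescences built at earlier stages.

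\textbf{Main obstacle.} The delicate step is deducing the cut hypothesis of Theorem~\ref{8} on each auxiliary instance purely from the global condition \eqref{2}; this is an uncrossing argument in which a tight set $Y \subseteq \Gamma_j$ in the auxiliary problem must be lifted to a larger set in $V$ on which \eqref{2} can be applied. Since $p(X) := r_M(S_{W(X)}) - r_M(S_X)$ is the difference of two submodular set functions (using $W(X \cup Y) = W(X) \cup W(Y)$, $W(X \cap Y) \subseteq W(X) \cap W(Y)$, modularity of $\pi^{-1}$, and submodularity of $r_M$), $p$ is not supermodular globally, and the uncrossing must exploit the fact that $W$ restricted to a single atom behaves modularly, so the obstruction to supermodularity is concentrated at atom boundaries and can be absorbed by the inductive stage. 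Once this technical hurdle is cleared, the construction is constructive and runs in polynomial time (given a matroid rank oracle), delivering the algorithm promised in the abstract.
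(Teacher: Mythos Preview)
The paper does not give its own proof of Theorem~\ref{1}: the result is quoted from Kir\'aly~\cite{kiraly-16} and used as a black box (in Lemma~\ref{6} and in both directions $(i)\Rightarrow(ii)$ and $(iii)\Rightarrow(i)$ of Theorem~\ref{mainresult}). So there is no proof here to compare your proposal against.

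On the proposal itself. Your necessity argument is correct and complete. Your sufficiency outline---peel off atoms in a topological order and, inside each atom, reduce to a spanning instance handled by Theorem~\ref{8}---is a reasonable plan and is in the spirit of the original proof, but the step you call the ``main obstacle'' is genuinely missing, and one piece of the intuition you offer for it is wrong. You assert that ``$W$ restricted to a single atom behaves modularly'' so that the failure of supermodularity of $p(X)=r_M(S_{W(X)})-r_M(S_X)$ is ``concentrated at atom boundaries''. This is false: take a single root reaching everything (so there is one atom), vertices $a,c,d,e$ with arcs $a\to c$ and $a\to d$ but no arc into $e$; for $X=\{c,e\}$ and $Y=\{d,e\}$ one has $a\in W(X)\cap W(Y)$ but $a\notin W(X\cap Y)=W(\{e\})$. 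Thus $W$ is not modular within an atom, and the uncrossing you need cannot be obtained from that premise. What actually makes the reduction go through is a tight-set argument of the type recorded in this paper as Lemma~\ref{5} (minimal tight sets entered by a given arc sit inside $W(v)$), and your sketch does not yet invoke or reprove anything of that kind. A second, smaller gap: the auxiliary matroid on $\Gamma_j$ is not a mere ``truncation of $M$ to the roots relevant for $\Gamma_j$''; you have to encode which already-built arborescences must enter $\Gamma_j$ and via which boundary arcs, which requires more structure than a truncation.
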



Fortier, Kir\'{a}ly, L\'{e}onard, Szigeti and Talon \cite{fortier-kiraly-leonard-szigeti-talon-18} had mentioned the following research problem: how to extend Theorem~\ref{1} to mixed graphs (therefore also generalize Theorem~\ref{3} to allow matroid constraints).
Matsuoka and Tanigawa \cite{matsuoka} remarked that Theorem~\ref{3} can be established in a more general setting by allowing
matroid constraints.
This way of generalization 
relies on some recent results on the reachability arborescence packing
by Kir\'{a}ly, Szigeti and Tanigawa \cite{kiraly-szigeti-tanigawa-18}.


In this paper, we give a new characterization for packing of maximal independent mixed arborescences under matroid constraints. This new characterization is simplified to the form of finding an {\em intersecting supermodular function  that should be covered} (to be defined at the beginning of Section 2) by an orientation of each strong component of a matroid-based rooted mixed graph $F$.
Recall that $C$  is a  {\em strong component} of $F$ if it is a maximal subgraph of $F$ for which for any two vertices $u, v$ of $C$, $u$ and $v$ are reachable from each other in $C$.
Our new characterization (main result) is the following theorem, and the simplified form is Statement $(iii)$.  Note that our new characterization extends Theorem~\ref{1} to mixed graphs. 

\begin{theorem} \label{mainresult}
Let $(F=(V; E,A), M,S,\pi)$ be a matroid-based rooted mixed graph. Then the following statements are equivalent.
\begin{itemize}
\item[(i)] 
$\exists$ a maximal $M$-independent packing of mixed arborescences in $(F,M,S,\pi)$.

\item[(ii)]$\pi$ is $M$-independent; and
\begin{equation} \label{9}
e_{E}(\mathcal{P}) + \sum_{q=1}^{t} d^{-}_{A}(X^{q}) \geq \sum_{q=1}^{t} (r_{M}(S_{W(V(C))})-r_{M}(S_{(X^{q})_{O}}) )
\end{equation}
holds
for any family of bi-sets $\{X^{1}, \ldots, X^{t} \}$ such that $\mathcal{P} = \{ (X^{1})_{I}, \ldots, (X^{t})_{I}\}$ is a vertex subpartition of some strong component $C$ and $ (X^{q})_{O} \setminus (X^{q})_{I} =W(Y)$ for some $Y \subseteq W(V(C)) \setminus V(C)$, where $q=1, \ldots, t.$

\item[(iii)]$\pi$ is $M$-independent; and
\begin{equation}\label{20}
e_{E}(\mathcal{P}) \geq \sum_{q=1}^{t}f_{C}(X_{q})
\end{equation}
holds
for any strong component $C$ of $F$ and subpartition $\mathcal{P}=\{X_{1}, \ldots, X_{t}\}$ of $V(C)$,
where  $f_{C}(X_{q})=\max \{r_{M}(S_{W(V(C))})-r_{M}(S_{X})- d_{A}^{-}(X): X_{q} \subseteq X$ and $ X \setminus X_{q} = W(Y)$ for some $Y \subseteq W(V(C)) \setminus V(C) \} $.
\end{itemize}
\end{theorem}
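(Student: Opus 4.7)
The plan is to treat $(ii)\Leftrightarrow(iii)$ as a routine reformulation, and to reduce the substantive equivalence $(i)\Leftrightarrow(iii)$ to an orientation problem whose output can be fed into Theorem~\ref{1}. The backbone of the argument is: show that $f_C$ is an intersecting supermodular function on $V(C)$, apply an orientation theorem of Frank type to produce, strong-component by strong-component, an orientation of the undirected edges covering $f_C$, and verify that the resulting digraph satisfies the hypothesis of Theorem~\ref{1}.

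For $(ii)\Leftrightarrow(iii)$, I would fix a subpartition $\{X_1,\ldots,X_t\}$ of $V(C)$ and note that \eqref{9} turns into \eqref{20} after independently choosing, for each $q$, the outer shell $W(Y_q)$ that realizes $f_C(X_q)$. The only identity needed is that the bi-set in-degree $d_A^-(X^q)$ equals the set in-degree $d_A^-((X^q)_O)$: any arc entering $W(Y_q)$ would, by the defining property of $W(\cdot)$, have its tail already inside $W(Y_q)\subseteq(X^q)_O$, so no such arcs cross the outer boundary. For necessity $(i)\Rightarrow(iii)$, I would argue directly from a packing $\{T_1,\ldots,T_{|S|}\}$: every $s_i$ whose mixed arborescence $T_i$ must reach $X_q$ but whose root $\pi(s_i)$ lies outside $(X^q)_O$ contributes at least one edge or arc entering $X^q$; edge- and arc-disjointness then sums these contributions, while the count of such $s_i$'s is controlled by the matroid rank drop $r_M(S_{W(V(C))})-r_M(S_{(X^q)_O})$ via the maximality and independence built into the packing.

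The heart of the proof is $(iii)\Rightarrow(i)$, which I would carry out in three substeps. First, I verify that $f_C$ is positively intersecting supermodular on $V(C)$, using submodularity of $X\mapsto r_M(S_X)$, submodularity of the set in-degree $d_A^-$, and an uncrossing of the outer shells: if $W(Y)$ and $W(Y')$ realize $f_C(X)$ and $f_C(X')$ for intersecting $X,X'\subseteq V(C)$, then $Y\cap Y'$ and $Y\cup Y'$ (or a suitable variant) yield admissible shells witnessing $f_C(X\cap X')$ and $f_C(X\cup X')$. Second, I invoke the classical orientation theorem for intersecting supermodular functions: the subpartition condition \eqref{20} is then equivalent to the existence of an orientation of the undirected edges inside $C$ (note that any undirected edge between two distinct strong components would merge them, so every undirected edge does lie inside some component) such that the oriented in-degree at each $X\subseteq V(C)$ is at least $f_C(X)$. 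Third, after orienting the edges inside every strong component, I check that the resulting digraph $D$ satisfies $d_D^-(X)\geq r_M(S_{W(X)})-r_M(S_X)$ for every nonempty $X\subseteq V$; this localizes to the strong component containing the vertices deepest in $X$, and the maximum over $Y$ in the definition of $f_C$ is tailored to absorb exactly the part of $X$ lying in $W(X)\setminus X$ outside that component. Theorem~\ref{1} then delivers the desired $M$-independent packing of arborescences on $D$, which, by reversing the orientation, is also a packing of mixed arborescences in $F$.

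The main obstacle will be substeps one and three above. Intersecting supermodularity of $f_C$ is delicate because $f_C$ is defined by a maximization, so the uncrossing of the inner parts $X,X'$ must be coordinated with an uncrossing of the outer shells $W(Y),W(Y')$ while respecting the constraint $Y\subseteq W(V(C))\setminus V(C)$. Globalizing the per-component in-degree bound to all subsets of $V$ requires a careful case analysis of how arcs and the reachability closure $W(\cdot)$ interact across strong components; this is precisely where the particular form of the outer parts in (ii), namely ``$W(Y)$ with $Y\subseteq W(V(C))\setminus V(C)$'', earns its keep.
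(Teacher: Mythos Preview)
Your overall strategy matches the paper's: show that $f_C$ is intersecting supermodular, invoke Frank's orientation theorem (Theorem~\ref{7}) on each strong component, and then feed the resulting digraph into Kir\'aly's Theorem~\ref{1}. The treatment of $(ii)\Leftrightarrow(iii)$ and of supermodularity (your substep one) is essentially what the paper does in Claim~\ref{12}, including the observation that the intersection of two outer shells $W(Y_1)\cap W(Y_2)$ is again of the form $W(Y_3)$.

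The place where your outline is too optimistic is substep three. You write that verifying $d_D^-(X)\ge r_M(S_{W(X)})-r_M(S_X)$ ``localizes to the strong component containing the vertices deepest in $X$, and the maximum over $Y$ in the definition of $f_C$ is tailored to absorb exactly the part of $X$ lying in $W(X)\setminus X$ outside that component.'' But the part of $X$ lying outside that deepest component $C$ is a set $Y:=X\setminus V(C)\subseteq W(V(C))\setminus V(C)$, and the outer shells admitted in $f_C$ must have the form $W(Y')$, which will in general \emph{strictly} contain $Y$. So $f_C$ by itself cannot absorb $Y$; you still have to account for the gap between $Y$ and $W(Y)$, and that gap lives in components other than $C$, where only the orientations covering $f_{C'}$ for $C'\neq C$ give you any control.

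The paper closes this gap with two ingredients you have not mentioned. First, Lemma~\ref{6} shows that Kir\'aly's inequality~(\ref{2}) need only be checked on sets $X$ with $v\in X\subseteq W(v)$ for some $v$; this is what makes the ``deepest component'' picture legitimate. Second, the paper argues by induction on the number $\tau(F)$ of strong components, peeling off a sink component $C_0$: the induction hypothesis yields a packing (hence inequality~(\ref{11})) on $F_1=F-C_0$, and for $X$ meeting $C_0$ one uncrosses $X$ with $W(Y)$ (Lemma~\ref{14-L}). Submodularity of $r_M(S_{\cdot})+d_D^-(\cdot)$ applied to the pair $(X,W(Y))$ bounds the desired quantity by the sum of the term for $(X\cap V(C_0))\cup W(Y)$, which is handled by $f_{C_0}$ via~(\ref{10}), and the term for $Y=X\cap W(Y)$, which is handled by the induction hypothesis via~(\ref{11}). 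Your plan, orienting all components simultaneously and then verifying the global inequality directly, would still need this uncrossing together with some induction (on $\tau(F)$ or on the number of components that $X$ meets) to make substep three go through.
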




The proof of the main result is Section $2$. Here we show that
{\bf Theorem~\ref{mainresult} deduces the sufficiency of Theorem~\ref{3}.} Thus it generalizes Theorem~\ref{3} to allow
matroid constraints, therefore this new characterization provides a new and simple solution to the above mentioned research problem.

Let $r_{1}, \ldots, r_{k} \in V$, $S = \{1, \ldots, k \}$ and $\pi: S \rightarrow V$ such that $\pi(i)=r_{i}$ for $i=1, \ldots, k$. Let $M$ be a free matroid on $S$. Then a maximal $M$-independent packing of mixed arborescences is exactly a packing of maximal mixed arborescences w.r.t. $\{r_{1}, \ldots, r_{k} \}$. 
Then it suffices to show the following fact:
\begin{fact}
	If (\ref{4}) holds, then (\ref{9}) holds.
\end{fact}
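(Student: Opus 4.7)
The plan is to prove (\ref{4})$\Rightarrow$(\ref{9}) in the free-matroid specialization by a dichotomy on the bi-set family $\{X^{1},\ldots,X^{t}\}$ satisfying the hypotheses of (ii). First I would fix the atom $\Gamma_{j}$ (with respect to the root multiset $\{r_{1},\ldots,r_{k}\}$) that contains $V(C)$; it exists and is unique because any two vertices of a strong component reach each other and are therefore reached by exactly the same roots. Under the free matroid, $r_{M}(\cdot)=|\cdot|$ and $X^{q}_{O}\subseteq W(V(C))$, so the $q$-th summand on the right of (\ref{9}) simplifies to $|\{i:r_{i}\in W(V(C))\setminus X^{q}_{O}\}|$.

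The key step is to partition the indices into $A=\{q:W(Y_{q})\cap\Gamma_{j}=\emptyset\}$ and its complement $B$, and to observe that indices in $B$ contribute zero to the right side of (\ref{9}). For $q\in B$, pick any $v\in W(Y_{q})\cap\Gamma_{j}$; the atom property forces the set of roots reaching $v$ to equal $\{i:r_{i}\in W(V(C))\}$, and since $v\in W(Y_{q})$ means $v$ reaches $Y_{q}$ (or $v\in Y_{q}$), every such root also reaches $Y_{q}$ and therefore lies in $W(Y_{q})\subseteq X^{q}_{O}$. For $q\in A$, the bi-set is admissible for Theorem~\ref{3} with respect to $\Gamma_{j}$, since $X^{q}_{I}\subseteq V(C)\subseteq\Gamma_{j}$ and $(X^{q}_{O}\setminus X^{q}_{I})\cap\Gamma_{j}=W(Y_{q})\cap\Gamma_{j}=\emptyset$. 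A direct check matches the $q$-th summand on the right of (\ref{4}) with $|\{i:r_{i}\in W(V(C))\setminus X^{q}_{O}\}|$: the clause $X^{q}_{I}\subseteq U_{i}\setminus\{r_{i}\}$ is equivalent to $r_{i}\in W(V(C))\setminus X^{q}_{I}$ (using that $V(C)$ is strongly connected and contains $X^{q}_{I}$), and the clause $(X^{q}_{O}\setminus X^{q}_{I})\cap U_{i}=\emptyset$ is equivalent to $r_{i}\notin W(Y_{q})$.

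Putting the pieces together, I would apply (\ref{4}) to the sub-family $\{X^{q}:q\in A\}$ and the atom $\Gamma_{j}$, obtaining $e_{E}(\mathcal{P}_{A})+\sum_{q\in A}d_{A}^{-}(X^{q})\ge\sum_{q\in A}|\{i:r_{i}\in W(V(C))\setminus X^{q}_{O}\}|$, where $\mathcal{P}_{A}=\{X^{q}_{I}:q\in A\}$. Since dropping parts from a subpartition can only decrease $e_{E}$ (an edge from a removed part to $V\setminus\bigcup_{q}X^{q}_{I}$ is no longer counted) and the $d_{A}^{-}(X^{q})$ for $q\in B$ are non-negative, the left-hand side of (\ref{9}) dominates the left-hand side of the inequality just derived. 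Combined with the vanishing of the $B$-contributions on the right, this yields (\ref{9}). The main obstacle is spotting the dichotomy: once one notices that a single vertex in $W(Y_{q})\cap\Gamma_{j}$ propagates through the atom property to force $W(V(C))\subseteq W(Y_{q})$, the rest is a straightforward matching of the two right-hand sides plus monotonicity of $e_{E}$ and $d_{A}^{-}$.
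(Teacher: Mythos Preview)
Your proposal is correct and follows essentially the same approach as the paper: both fix the atom $\Gamma_{j}\supseteq V(C)$, split the bi-set family according to whether $W(Y_{q})\cap\Gamma_{j}$ is empty, show the ``bad'' indices contribute nonpositively (in fact zero, as you observe) to the right-hand side of (\ref{9}), verify that the remaining bi-sets are admissible for (\ref{4}) with matching summands, and finish using monotonicity of $e_{E}$ under dropping parts together with nonnegativity of $d_{A}^{-}$. The paper additionally records the identity $d_{A}^{-}(X^{q})=d_{A}^{-}((X^{q})_{O})$ explicitly, but otherwise the arguments coincide.
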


%

\begin{proof}
Suppose (\ref{4}) holds. Let $C$ be a strong component of $F$ and $X=(X_{O},X_{I})$ a bi-set such that $X_{I} \subseteq V(C)$ and $X_{O} \setminus X_{I} =W(Y)$ for some $Y \subseteq W(V(C)) \setminus V(C)$. Note that  $d_{A}^{-}(X)=d_{A}^{-}(X_{O})- |A(V \setminus X_{O}, W(Y))|$ and $A(V \setminus X_{O}, W(Y)) =\emptyset$, thus
\begin{equation}\label{eq-O}
d_{A}^{-}(X)=d_{A}^{-}(X_{O}).
\end{equation}
Let $s \in S$ and $U_{s}$ be the set of vertices reachable from $\pi(s)$ in $F$. Then $\pi(s) \in W(V(C))$ if and only if $X_{I} \subseteq U_{s}$; and $ \pi(s) \notin W(Y)$ if and only if $W(Y) \cap U_{s} = \emptyset$. Since $M$ is a free matroid and $X_{O}=X_{I} \cup W(Y)$, we have
\begin{equation}\label{16}
\begin{split}
& r_{M}(S_{W(V(C))})-r_{M}(S_{X_{O}})  = |S_{W(V(C))- X_{I}-W(Y)}| \\
& = | \{ s \in S: \pi(s) \in W(V(C)), ~\pi(s) \notin X_{I}, \mbox{and} ~\pi(s) \notin W(Y)\}| \\
&  = | \{ s \in S: X_{I} \subseteq U_{s}, ~\pi(s) \notin X_{I}, \mbox{and} ~W(Y) \cap U_{s} =\emptyset\}|. \\
\end{split}
\end{equation}

For any two $u,v \in V(C)$, $u \sim v$ (by definition); thus $V(C) \subseteq \Gamma_{j}$ for some atom $\Gamma_{j}$.


If $W(Y) \cap \Gamma_{j} \neq \emptyset$, then $\pi(s) \in W(V(C))$ implies $\pi(s) \in W(Y)$; thus $S_{W(V(C))} \subseteq S_{W(Y)} \subseteq S_{X_{O}} $ and
\begin{equation}\label{18}
r_{M}(S_{W(V(C))})-r_{M}(S_{X_{O}})  - d_{A}^{-}( X_{O} ) \leq 0.
\end{equation}
Let $\{X^{1}, \ldots, X^{t} \}$ be a family of bi-sets such that $\mathcal{P} = \{ (X^{1})_{I}, \ldots, (X^{t})_{I}\}$ is a subpartition of $V(C)$ and that $ (X^{q})_{O} \setminus (X^{q})_{I} =W(Y)$ for some $Y \subseteq W(V(C)) \setminus V(C)$, where $q=1, \ldots, t$. Then we have
\[
\begin{split}
& e_{E}(\mathcal{P})
 \geq e_{E}(\{(X^{q})_{I}: ((X^{q})_{O} \setminus (X^{q})_{I}) \cap \Gamma_{j} =\emptyset \}) \\
 & \geq \sum_{((X^{q})_{O} \setminus (X^{q})_{I}) \cap \Gamma_{j} =\emptyset }( |\{ s : (X^{q} )_{I} \subseteq U_{s} \setminus \{\pi(s) \}, ((X^{q})_{O} \setminus (X^{q} )_{I} ) \cap U_{s} = \emptyset\}| - d_{A}^{-}(X^{q})   ) \\
&~~~~~~~~~~~~~~~~~~~~~~~~~~~~~~~~~~~~~~~~~~~~~~~~~~~~~~~~~~~~~~~~~~~~~~~~~~~~~~~~~~\text{(by (\ref{4}))}  \\
& =   \sum_{((X^{q})_{O} \setminus (X^{q})_{I}) \cap \Gamma_{j} =\emptyset} (r_{M}(S_{W(V(C))})-r_{M}(S_{(X^{q})_{O}})  - d_{A}^{-}( (X^{q})_{O} )~~\text{(by (\ref{eq-O})~and~(\ref{16}))}  \\
&  \geq  ~~ \sum_{q=1}^{t} (r_{M}(S_{W(V(C))})-r_{M}(S_{(X^{q})_{O}})  - d_{A}^{-}( (X^{q})_{O} ))~~~~~~~~~~~~~~~~\text{(by (\ref{18}))}.  \\
\end{split}
\]
That is, (\ref{9}) holds.
\end{proof}

\section{Proof of Theorem~\ref{mainresult}}

Let $\Omega$ be a set and $X_{1}, X_{2} \subseteq \Omega$. $X_{1}$ and $X_{2}$ are {\em intersecting} if $X_{1} \cap X_{2} \neq \emptyset$.
A function $p: 2^{\Omega} \rightarrow \mathbb{Z}$ is {\em supermodular (intersecting supermodular)} if the inequality
\[
p(X)+p(Y) \leq p(X \cup Y)+ p(X \cap Y)
\]
holds for all subsets (intersecting subsets, respectively) of $\Omega$. A function $b$ is {\em submodular} if $-b$ is supermodular.
For some recent work related to supermodularity in graph optimization, refer to  \cite{berczi1,berczi2,berczi3,GY}.

A family $\mathcal{H}$ of subsets of $V$ is {\em intersecting}  if for any $X, Y \in \mathcal{H}$, $X \cap Y \in \mathcal{H}$. For a set function $f: \mathcal{H} \rightarrow \mathbb{Z}$, a directed graph $D = (V,A)$ (or just $A$) is said to {\em cover $f$} if $d^{-}_{A}(X) \geq f(X)$ holds for all $X \in \mathcal{H}$.

\subsection{Preliminaries}

Let $(D=(V,A),M,S,\pi)$ be a matroid-based rooted digraph.
Suppose (\ref{2}) holds for each $\emptyset \neq X \subseteq V$, we say $X_{0} \subseteq V$ is {\em tight} if the equality of (\ref{2}) holds.
Note that the in-degree function $d^{-}_{A}$ of $D$ and rank function of a matroid is submodular.


\begin{lemma}(Lemma 10 of \cite{kiraly-16}, adapted)\label{5}
Let $(D=(V,A),M,S,\pi)$ be a matroid-based rooted digraph for which (\ref{2}) holds for each $\emptyset \neq X \subseteq V$. Let $uv \in A$ and $X_{0}$ be a minimal tight set such that the arc $uv$ enters $X_{0}$. Then $X_{0} \subseteq W(v)$.
\end{lemma}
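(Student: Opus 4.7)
The plan is a contradiction argument producing a strictly smaller tight set entered by $uv$. Suppose $X_0\not\subseteq W(v)$ and set $X':=X_0\cap W(v)$. Then $X'$ is nonempty because $v\in X_0\cap W(v)$, it is properly contained in $X_0$ by the contradiction hypothesis, and the arc $uv$ still enters $X'$ since $u\notin X_0\supseteq X'$ and $v\in X'$. If I can show that $X'$ is tight, this contradicts the minimality of $X_0$.

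The verification of tightness will rest on three elementary properties of $W$ and $d_A^-$: (a) $W(X')=W(v)$, since $v\in X'\subseteq W(v)$ and $W$ is monotone and idempotent; (b) $W(X_0\cup W(v))=W(X_0)$, from $W(v)\subseteq W(X_0)$ (as $v\in X_0$) together with $W(A\cup B)=W(A)\cup W(B)$; and (c) $d_A^-(W(v))=0$, because any arc from $V\setminus W(v)$ into $W(v)$ would let its tail reach $v$, placing it in $W(v)$, a contradiction. I will also use that $X\mapsto r_M(S_X)$ is submodular, which follows from $S_{X\cup Y}=S_X\cup S_Y$, $S_{X\cap Y}=S_X\cap S_Y$ and the submodularity of the matroid rank function.

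The core computation uncrosses $X_0$ with $W(v)$. Submodularity of $d_A^-$ gives
\[
d_A^-(X')\le d_A^-(X_0)+d_A^-(W(v))-d_A^-(X_0\cup W(v));
\]
substituting the tightness equation $d_A^-(X_0)=r_M(S_{W(X_0)})-r_M(S_{X_0})$, the identity (c), and the lower bound (\ref{2}) applied to $X_0\cup W(v)$ (whose $W$-value is $W(X_0)$ by (b)) cancels the $r_M(S_{W(X_0)})$ terms and yields $d_A^-(X')\le r_M(S_{X_0\cup W(v)})-r_M(S_{X_0})$. Submodularity of $X\mapsto r_M(S_X)$ further bounds the right side by $r_M(S_{W(v)})-r_M(S_{X'})$, which equals $r_M(S_{W(X')})-r_M(S_{X'})$ by (a). Combined with the reverse inequality from (\ref{2}) applied to $X'$, equality holds, so $X'$ is tight, delivering the contradiction. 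The main obstacle is organizing this uncrossing cleanly, so that the $r_M(S_{W(X_0)})$ terms cancel and the equality $d_A^-(W(v))=0$ is used at exactly the right step; once (a)--(c) are in hand, everything else is routine submodular calculus.
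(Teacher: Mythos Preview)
The paper does not supply its own proof of this lemma; it is quoted from Kir\'aly's paper \cite{kiraly-16} (``Lemma 10, adapted'') without argument. So there is nothing in the present paper to compare against. That said, your proof is correct and is exactly the standard uncrossing argument one expects here: intersect $X_0$ with $W(v)$, use submodularity of $d_A^-$ and of $X\mapsto r_M(S_X)$, and exploit $d_A^-(W(v))=0$ together with $W(X_0\cup W(v))=W(X_0)$ to force the intersection to be tight as well, contradicting minimality. Each of your auxiliary facts (a)--(c) holds for the reasons you give, and the chain of inequalities is assembled correctly.

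One small presentational point: you should make explicit that $X_0\cup W(v)$ is nonempty (trivial) so that invoking (\ref{2}) on it is legitimate, and that $S_{X\cap Y}=S_X\cap S_Y$ holds because $\pi$ is a function (which you do note). Otherwise the argument is complete and would be a suitable replacement for the bare citation.
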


\begin{lemma}\label{6}
Let $(D=(V,A), M,S,\pi)$ be a matroid-based rooted digraph. There exists a maximal $M$-independent packing of arborescences in $(D,M,S,\pi)$ if and only if $\pi$ is $M$-independent and (\ref{2}) holds for $X \subseteq V$ such that $v \in X \subseteq W(v)$ for some $v \in V$.
\end{lemma}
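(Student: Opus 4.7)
The forward direction is immediate from Theorem~\ref{1}, which gives (\ref{2}) for every nonempty $X$, in particular those of the restricted form. For the converse, by Theorem~\ref{1} it suffices to upgrade the hypothesis---(\ref{2}) for $X$ with $v \in X \subseteq W(v)$---to (\ref{2}) for every nonempty $X \subseteq V$. I would argue by contradiction using an inclusion-maximal violator.

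Set $p(X) := r_M(S_{W(X)}) - r_M(S_X) - d_A^-(X)$, so the goal becomes $p(X) \leq 0$ everywhere. Suppose not; pick an inclusion-maximal $X_0$ with $p(X_0) > 0$. The key step is to show $W(v) \subseteq X_0$ for every $v \in X_0$. Fix such a $v$ and set $Y := X_0 \cap W(v)$. Since no arc enters $W(v)$ from outside (indeed $W(v)$ is closed under predecessors), we have $d_A^-(W(v)) = 0$. Moreover $v \in Y \subseteq W(v)$ and $W(Y) = W(v)$, so the hypothesis applied to $Y$ yields $d_A^-(Y) \geq r_M(S_{W(v)}) - r_M(S_Y)$. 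Combining this with submodularity of $d_A^-$ on the pair $(X_0, W(v))$ and submodularity of $r_M$ on $(S_{X_0}, S_{W(v)})$, a short uncrossing calculation gives
\[
p(X_0 \cup W(v)) \geq p(X_0),
\]
using that $W(X_0 \cup W(v)) = W(X_0)$ (which holds because $v \in X_0$ forces $W(v) \subseteq W(X_0)$, so the big $W$-set is unchanged by the union). Hence $X_0 \cup W(v)$ is also a violator, and maximality of $X_0$ forces $X_0 \cup W(v) = X_0$, i.e.\ $W(v) \subseteq X_0$.

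Applying this to every $v \in X_0$ yields $W(X_0) = \bigcup_{v \in X_0} W(v) \subseteq X_0$, so $X_0 = W(X_0)$. But then no arc enters $X_0$, whence $d_A^-(X_0) = 0$ and $r_M(S_{W(X_0)}) - r_M(S_{X_0}) = 0$, giving $p(X_0) = 0$, a contradiction. The main technical obstacle is the uncrossing bookkeeping behind $p(X_0 \cup W(v)) \geq p(X_0)$: one must align the submodular inequality for $d_A^-$ (using $d_A^-(W(v)) = 0$), the submodular inequality for $r_M(S_{\cdot})$ on $(S_{X_0}, S_{W(v)})$, and the hypothesis inequality for $Y$ so that the $r_M(S_{W(v)})$ terms cancel and the residual term $d_A^-(Y) + r_M(S_Y) - r_M(S_{W(v)}) \geq 0$ appears in the right place.
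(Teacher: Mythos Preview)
Your argument is correct. The uncrossing works exactly as you outline: with $Y=X_0\cap W(v)$ one has $S_{X_0\cup W(v)}=S_{X_0}\cup S_{W(v)}$ and $S_Y=S_{X_0}\cap S_{W(v)}$, so submodularity of $r_M$ and of $d_A^-$ (together with $d_A^-(W(v))=0$ and $W(X_0\cup W(v))=W(X_0)$) gives $p(X_0\cup W(v))\ge p(X_0)+\bigl(d_A^-(Y)+r_M(S_Y)-r_M(S_{W(v)})\bigr)$, and the bracketed term is nonnegative by the hypothesis applied to $Y$ since $v\in Y\subseteq W(v)$ and $W(Y)=W(v)$. Maximality then forces $W(v)\subseteq X_0$ for every $v\in X_0$, so $X_0=W(X_0)$ and $p(X_0)=0$, a contradiction.

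Your route is genuinely different from the paper's. The paper does not uncross directly in $D$; instead it builds a minimal supergraph $D'=(V,A')$ with $A\subseteq A'$, the same reachability sets, and (\ref{2}) valid everywhere in $D'$. Any extra arc $u_0v_0\in A'\setminus A$ must then be critical, and a minimal tight set $X_2$ in $D'$ entered by $u_0v_0$ satisfies $v_0\in X_2\subseteq W(v_0)$ by Lemma~\ref{5} (quoted from Kir\'aly). Since $d_A^-(X_2)<d_{A'}^-(X_2)=r_M(S_{W(X_2)})-r_M(S_{X_2})$, this contradicts the restricted hypothesis. Your approach is more self-contained: it avoids the auxiliary digraph $D'$ and the external Lemma~\ref{5}, trading them for a single submodular uncrossing on an inclusion-maximal violator. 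The paper's approach, on the other hand, ties the lemma back into Kir\'aly's tight-set machinery, which is already being cited elsewhere.
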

\begin{proof}
The necessity comes from Theorem~\ref{1} directly.

For the sufficiency, suppose to the contrary that $D$ does not have such a packing. By Theorem~\ref{1}, there exists $X_{0} \subseteq V$ such that $d_{A}^{-}(X_{0}) < r_{M}(S_{W(X_{0})}) - r_{M}(S_{X_{0}})$. Let $D'=(V,A')$ be a minimal digraph for which:
 $(i)$ $A \subseteq A'$, $(ii)$ $W_{D}(v)=W_{D'}(v)$ for each $v \in V$, and $(iii)$  $d_{A'}^{-}(X) \geq r_{M}(S_{W(X)}) - r_{M}(S_{X})$ for $X \subseteq V$.  Note that such a digraph exists because we can always add arcs $uv$ with $u \in W_{D}(v)$ till  Condition $(iii)$  holds. Then $ d_{A'}^{-}(X_{0}) \geq r_{M}(S_{W(X_{0})}) - r_{M}(S_{X_{0}}) > d_{A}^{-}(X_{0})$; and there exists an arc $u_{0}v_{0} \in A' \setminus A$.

 By the minimality of $D'$, there exists $X_{1} \subseteq V$ such that  $d_{A'-u_{0}v_{0}}^{-}(X_{1}) < r_{M}(S_{W(X_{1})}) - r_{M}(S_{X_{1}}) $. Since $d_{A'}^{-}(X_{1}) \geq r_{M}(S_{W(X_{1})}) - r_{M}(S_{X_{1}}) $, we have $d_{A'}^{-}(X_{1}) = r_{M}(S_{W(X_{1})}) - r_{M}(S_{X_{1}})  $ (that is $X_{1}$ is tight) and $u_{0}v_{0}$ enters $X_{1}$. Let $X_{2}$ be a minimal tight set of $D'$ such that $u_{0}v_{0}$ enters $X_{2}$. Then
\begin{equation}\label{in-A}
d^{-}_{A}(X_{2}) \leq d^{-}_{A'-u_{0}v_{0}}(X_{2}) < d^{-}_{A'}(X_{2}) = r_{M}(S_{W(X_{2})})-r_{M}(S_{X_{2}}).
\end{equation}

But by Lemma \ref{5}, $v_{0} \in X_{2} \subseteq W(v_{0})$. Then by the assumption of this lemma, $d^{-}_{A}(X_{2}) \geq r_{M}(S_{W(X_{2})})-r_{M}(S_{X_{2}})$, a contradiction to (\ref{in-A}).
\end{proof}

\begin{theorem}(\cite{frankon}) \label{7}
Let $G = (V,E)$ be an undirected graph, $\mathcal{H} \subseteq 2^{V}$ be an intersecting family with $\emptyset \notin \mathcal{H}$ and $V \in \mathcal{H}$, and $f : \mathcal{H} \rightarrow R$ an intersecting supermodular function with $f(V ) = 0$. There exists an orientation of $E$ that covers $f$ (that is $d^{-}_{A}(X) \geq f(X)$ for all $X \in \mathcal{H}$, where $A$ is the oriented arc set of $E$) if and only if
\[
e_{E}(\mathcal{P}) \geq \sum_{i=1}^{t}f(V_{i})
\]
holds for every collection $\mathcal{P} = \{V_{1}, \ldots ,V_{t} \}$ of mutually disjoint members of $\mathcal{H}$.
\end{theorem}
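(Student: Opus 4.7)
The plan is to prove the chain (i) $\Rightarrow$ (ii) $\Leftrightarrow$ (iii) $\Rightarrow$ (i); the last arrow carries almost all the content and combines Theorem~\ref{7} with Lemma~\ref{6}. For (ii) $\Leftrightarrow$ (iii), I move $\sum_q d^-_A(X^q)$ to the right in (\ref{9}): the $q$-th summand becomes $r_M(S_{W(V(C))})-r_M(S_{(X^q)_O})-d^-_A(X^q)$, which depends on $Y_q$ only through $(X^q)_O=(X^q)_I\cup W(Y_q)$. Maximizing over admissible $Y_q$ with $(X^q)_I=X_q$ fixed produces exactly $f_C(X_q)$, so the two quantifications coincide. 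For (i) $\Rightarrow$ (ii), every $v\in V(C)$ has $W(v)=W(V(C))$, so $\{s:v\in V(T_s)\}$ is a base of $S_{W(V(C))}$ in $M$; for each bi-set $X^q$, therefore, at least $r_M(S_{W(V(C))})-r_M(S_{(X^q)_O})$ of the arborescences $T_s$ with $s\in S_{W(V(C))}\setminus S_{(X^q)_O}$ contain each vertex of $(X^q)_I$, and each such arborescence must enter $(X^q)_I$ via an arc of $A$ counted in the bi-set $d^-_A(X^q)$ or via a suitably oriented $E$-edge inside $V(C)$; arc/edge-disjointness across the $T_s$'s and across $q$ then delivers (\ref{9}).

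For (iii) $\Rightarrow$ (i), fix a strong component $C$. Every undirected edge acts bidirectionally in reachability, so the two endpoints of each $e\in E$ lie in the same strong component, and $E(C):=\{e\in E:e\subseteq V(C)\}$ collects all edges relevant to $C$. I apply Theorem~\ref{7} to $(V(C),E(C))$, $\mathcal{H}=\{X\subseteq V(C):X\neq\emptyset\}$, and $f=f_C$. Two properties must be verified. First, $f_C(V(C))=0$: choosing $Y=W(V(C))\setminus V(C)$ gives $X=W(V(C))$ with $d^-_A(X)=0$ and matching ranks, so $f_C(V(C))\ge 0$; conversely, (iii) applied to the trivial subpartition $\mathcal{P}=\{V(C)\}$, together with $e_E(\{V(C)\})=0$, forces $f_C(V(C))\le 0$. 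Second, $f_C$ is intersecting supermodular: for intersecting $X_1,X_2\subseteq V(C)$ with optimizers $Y_1,Y_2$, set $X_i^*=X_i\cup W(Y_i)$ and use the crucial fact $W(Y_i)\cap V(C)=\emptyset$ (else a vertex of $V(C)$ reaching some $y\in W(V(C))\setminus V(C)$ would share a strong component with $y$). The cross-intersections vanish, giving $X_1^*\cup X_2^*=(X_1\cup X_2)\cup W(Y_1\cup Y_2)$ and $X_1^*\cap X_2^*=(X_1\cap X_2)\cup(W(Y_1)\cap W(Y_2))$; since $W(Y_1)\cap W(Y_2)$ is reach-closed it equals $W(Y')$ for an admissible $Y'$, while $W(Y_1\cup Y_2)=W(Y_1)\cup W(Y_2)$, so both combined sets are admissible competitors. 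Applying the supermodularity of $-r_M(S_{\cdot})-d^-_A(\cdot)$ (immediate from submodularity of the rank function and the in-degree function) to $X_1^*,X_2^*$ then yields $f_C(X_1)+f_C(X_2)\le f_C(X_1\cup X_2)+f_C(X_1\cap X_2)$. Theorem~\ref{7} now produces an orientation $B_C$ of $E(C)$ with $d^-_{B_C}(X)\ge f_C(X)$ for every nonempty $X\subseteq V(C)$.

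Let $D=(V,\,A\cup\bigcup_C B_C)$ and apply Lemma~\ref{6} to $(D,M,S,\pi)$: for any $X\subseteq V$ with $v\in X\subseteq W(v)$, let $C$ be the strong component of $v$ and write $X=X_v\cup Z$ with $X_v=X\cap V(C)\neq\emptyset$, $Z\subseteq W(V(C))\setminus V(C)$; then $W(v)=W(V(C))$ and the target inequality (\ref{2}) becomes $d^-_D(X)\ge r_M(S_{W(V(C))})-r_M(S_X)$. Decomposing $d^-_D(X)=d^-_A(X)+d^-_{B_C}(X_v)+\sum_{C'}d^-_{B_{C'}}(Z\cap V(C'))$ over the strong components $C'\subseteq W(V(C))\setminus V(C)$, the main obstacle is the apparent gap between what each local orientation $B_{C'}$ guarantees via $f_{C'}$ (a bound against outer sets of the restricted form $W(Y)$) and what Lemma~\ref{6} demands (the same bound against the arbitrary $Z$). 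I plan to bridge this by an induction on the DAG of strong components inside $W(V(C))$: at each step I use the submodular identity $d^-_A(X_v\cup Z)+d^-_A(W(Z))\ge d^-_A(X_v\cup W(Z))+d^-_A(Z)$ together with $d^-_A(W(Z))=0$ (no arc enters a reach-closed set) to pass from $Z$ to its reach-closure at a cost $d^-_A(Z)$, which I then redistribute against the inner orientation bounds $d^-_{B_{C'}}(Z\cap V(C'))\ge f_{C'}(Z\cap V(C'))$ inherited from Theorem~\ref{7} on each inner $C'$. Iterating replaces $Z$ by an admissible outer set $W(Y)$, after which $f_C(X_v)$ dominates the remainder and closes (\ref{2}). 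With Lemma~\ref{6} thereby providing an $M$-based packing of arborescences in $D$, reinterpreting each oriented edge of $\bigcup_CB_C$ as its undirected parent in $E$ converts the packing into the desired maximal $M$-independent packing of mixed arborescences in $F$; the whole construction is polynomial-time, since Theorem~\ref{7} and the ambient matroid-rank and flow computations are.
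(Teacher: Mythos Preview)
Your proposal does not address the stated theorem. Theorem~\ref{7} is Frank's orientation theorem, quoted from \cite{frankon} as a black box; the paper does not prove it and neither do you. What you have written is a proof sketch for Theorem~\ref{mainresult}, the paper's main result, which \emph{uses} Theorem~\ref{7} as a tool. So as a proof of the displayed statement this is simply off target.

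If your intention was Theorem~\ref{mainresult}, then your sketch is close to the paper's argument but organized differently. For (i)~$\Rightarrow$~(ii) the paper orients $E$ along the given packing and invokes Theorem~\ref{1}, whereas you count directly how many arborescences rooted outside $(X^q)_O$ must cross into $(X^q)_I$; both work. For (iii)~$\Rightarrow$~(i) the paper runs induction on the number $\tau(F)$ of strong components: it peels off a sink component $C_0$, obtains by induction a packing (hence an orientation $A_1$) on $F_1=F-C_0$ satisfying inequality~(\ref{11}) via Theorem~\ref{1}, orients $E(C_0)$ by Theorem~\ref{7}, and then proves the single Lemma~\ref{14-L} to feed Lemma~\ref{6}. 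You instead orient every component in one shot via Theorem~\ref{7} and push the induction into the verification of Lemma~\ref{6}, your ``induction on the DAG of strong components inside $W(V(C))$''. That verification is the delicate part: the paper's inequality~(\ref{11}) for the inner set $Y$ is not a direct consequence of the per-component orientation bounds $d^-_{B_{C'}}\ge f_{C'}$, it comes in the paper from the inductive packing and Theorem~\ref{1}. Your submodular-swap idea (replace $Z$ by $W(Z)$ at cost $d^-_A(Z)$, then absorb $d^-_A(Z)$ into the inner $f_{C'}$ bounds) is the right mechanism and, iterated down the DAG, does reproduce Lemma~\ref{14-L}; but as written it is only a plan, and you should make explicit that at each level the residual set has the admissible form $X'\cup W(Y')$ so that the next application of $f_{C'}$ is legitimate. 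The paper's top-down induction on $\tau(F)$ packages this more cleanly because~(\ref{11}) arrives for free from the hypothesis.
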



\subsection{Proof of Theorem~\ref{mainresult}}

We shall show that $(i)$ $\Rightarrow$ $(ii)$, $(ii)$ $\Rightarrow$ $(iii)$, and $(iii)$ $\Rightarrow$ $(i)$, this will finish the proof.

\smallskip

\noindent\textbf{(i) $\Rightarrow$ (ii):}
Suppose there exists a maximal $M$-independent packing of mixed arborescences in $(F,M,S,\pi)$,  then there exists an orientation $A'$ of $E$ such that in $(D'=(V, A \cup A'), M,S, \pi)$ there exists a maximal $M$-independent packing of arborescences. By Theorem~\ref{1},
\[
d^{-}_{A \cup A'}((X^{q})_{O}) \geq r_{M}(S_{W((X^{q})_{O})})-r_{M}(S_{(X^{q})_{O}}), ~ \mbox{and then }
\]
\begin{equation}\label{in-B}
d^{-}_{A'}((X^{q})_{O}) \geq r_{M}(S_{W((X^{q})_{O})})-r_{M}(S_{(X^{q})_{O}}) - d^{-}_{A}((X^{q})_{O}).
\end{equation}

By the definition of $d_{A'}^{-}(X^{q})$ for bi-set $X^{q}$, we have
$d_{A'}^{-}((X^{q})_{O})= d^{-}_{A'}(X^{q}) +|A'(V \setminus (X^{q})_{O}, W(Y)) |$,  and
$d_{A'}^{-}((X^{q})_{I})= d^{-}_{A'}(X^{q}) +|A'( W(Y), (X^{q})_{I}) |$.
Similarly, $d_{A}^{-}((X^{q})_{O})= d^{-}_{A}(X^{q}) +|A(V \setminus (X^{q})_{O}, W(Y)) |$.

Since $ (X^{q})_{O}=W(Y) \cup (X^{q})_{I}$ for some $Y \subseteq W(V(C)) \setminus V(C)$, and $(X^{q})_{I} \subseteq V(C)$, we have $W((X^{q})_{O})=W(V(C))$,  $A'(V \setminus (X^{q})_{O},W(Y)) = \emptyset$, and $A(V \setminus (X^{q})_{O},W(Y)) = \emptyset$.

Since $C$ is a strong component,
 there is no edge in $E$ between $V(C)$ and $V \setminus V(C)$;
 since
 $(X^{q})_{I} \subseteq V( C )$, and  $Y \subseteq W(V(C)) \setminus V(C)$ (then  $W(Y) \subseteq W(V(C)) \setminus V(C)$), we have $A'( W(Y), (X^{q})_{I}) = \emptyset$.

Hence,
\begin{equation}\label{eq-OI}
d_{A'}^{-}((X^{q})_{O}) = d_{A'}^{-}((X^{q})_{I})=d^{-}_{A'}(X^{q}),  ~~~  d_{A}^{-}((X^{q})_{O})=d^{-}_{A}(X^{q}).
\end{equation}
It follows that,
\[
\begin{split}
& e_{E}(\mathcal{P})
 \ge \sum_{q=1}^{t} d_{A'}^{-}((X^{q})_{I}) ~~~~~~~~(\text{since $A'$ is an orientation of $E$}) \\
& = \sum_{q=1}^{t} d^{-}_{A'}((X^{q})_{O}) ~~~~~~~~~~~~~~~(\text{by} ~(\ref{eq-OI})) \\
& \geq \sum_{q=1}^{t} ( r_{M}(S_{W((X^{q})_{O})})-r_{M}(S_{(X^{q})_{O}})- d^{-}_{A}((X^{q})_{O}))  ~~~~~~~(\text{by} ~(\ref{in-B})) \\
& = \sum_{q=1}^{t} (r_{M}(S_{W(V(C))})-r_{M}(S_{(X^{q})_{O}})-d^{-}_{A}(X^{q})) ~~~~
 (\text{by $W((X^{q})_{O})=W(V(C))$ and (\ref{eq-OI}))}.
\end{split}
\]

\noindent\textbf{(ii) $\Rightarrow$ (iii):}  For $1 \leq q \leq t$, suppose $Y_{q}$ satisfies that   $f_{C}(X_{q})=r_{M}(S_{W(V(C))})-r_{M}(S_{Y_{q}})- d_{A}^{-}(Y_{q})$, and  $ X_{q} \subseteq Y_{q}$ and $ Y_{q} \setminus X_{q} = W(Y)$ for some $Y \subseteq W(V(C)) \setminus V(C) $; define bi-set $X^{q}=(Y_{q},X_{q} )$.   Since $(ii)$ holds, we have
\[
	\begin{split}
	e_{E}(\mathcal{P})+ \sum_{q=1}^{t}d_{A}^{-}(Y_{q})
	& = e_{E}(\mathcal{P})+ \sum_{q=1}^{t} d_{A}^{-}((X^{q})_{O})
	 =e_{E}(\mathcal{P})+ \sum_{q=1}^{t} d_{A}^{-}(X^{q})  ~~~~~~~(\mbox{by }  (\ref{eq-OI})) \\
	& \geq \sum_{q=1}^{t} (r_{M}(S_{P(V(C))})-r_{M}(S_{(X^{q})_{O}}) ) ~~~~~~~~~~~~~~~~~~~~~~~~~~ (\mbox{by } (\ref{9})) \\
	& = \sum_{q=1}^{t} (r_{M}(S_{P(V(C))})-r_{M}(S_{Y_{q}}) ),
	\end{split}
\]
that is,
\[
	e_{E}(\mathcal{P}) \geq \sum_{q=1}^{t} (r_{M}(S_{P(V(C))})-r_{M}(S_{Y_{q}})-d_{A}^{-}(Y_{q}) )= \sum_{q=1}^{t}f_{C}(X_{q}).
\]

\noindent\textbf{(iii) $\Rightarrow$ (i):} Let  $\tau(F)$  be the the number of strong components of $F$.  We prove that  $(iii)$ $\Rightarrow$ $(i)$ by induction on $\tau(F)$. 





For the base step, suppose $\tau(F)=1$, i.e., $F$ is strongly connected.  Then, for any 
subpartition $\{X_{1}, \ldots, X_{t} \}$ of $V(F)$, by (\ref{20}),  we have
\[
e_{E}(\mathcal{P}) \geq \sum_{q=1}^{t}f_{F}(X_{q}) = \sum_{q=1}^{t} ( r_{M}(S) - r_{M}(S_{X_{q}})-d_{A}^{-}(X_{q})).
\]
Since $r_{M}$ and $d_{A}^{-}$ are submodular, we have $r_{M}(S) -  r_{M}(S_{X})-d_{A}^{-}(X)$ is intersecting supermodular on $2^{V}$, and $f(V) = 0$.

By Theorem~\ref{7}, there exists an orientation $A_{0}$ of $E$ such that the digraph $D_{0}=(V, A_{0})$ covers $r_{M}(S) - r_{M}(S_{X})-d_{A}^{-}(X)$, i.e.,  $d_{A_0}^{-}(X) \ge r_{M}(S) - r_{M}(S_{X})-d_{A}^{-}(X)$; this is the same as  $D_0'=(V, A \cup A_{0})$ covers $r_{M}(S) - r_{M}(S_{X})$. 
By Theorem~\ref{8}, there exists an $M$-based packing of arborescences in $(D_0', M, S, \pi)$.






For the induction step, suppose $\tau(F)=n \geq 2$, and suppose that $(iii)$ $\Rightarrow$ $(i)$ holds for $\tau(F) \leq n-1$. 


First we show that there exists a strong component $C_{0}$ of $F$ such that no arcs come out of  $C_{0}$. Assume otherwise, 
then each strong component has arcs coming out of it. But then $F$ itself is strongly connected,   a contradiction to $\tau(F) \geq 2$.  
Suppose $C_{0}$ is such a strong component, 
$F_{1}$ is the induced mixed graph on vertex set $V(F_1) := V(F) \setminus V(C_0)$. Then $\tau(F_{1})=n-1$.

The following fact is heavily used: $E(V(C_0), V(F_{1})) =\emptyset$, $A(V(C_0), V(F_{1})) =\emptyset$; therefore for  $X_{0} \subseteq V(F_{1})$, $W_{F_1}(X_{0}) = W(X_{0}) \subseteq V(F_1)$. 




By the induction hypothesis,  
there exists a maximal $M|S_{V(F_{1})}$-independent packing of mixed arborescences in $F_{1}$;
that is, there exist pairwise arc disjoint mixed $\pi(s_{i})$-arborescences $T'_{i}$ in $F_{1}$, where $1 \leq i \leq |S_{V(F_{1})}|$;
	and for any $v \in V(F_{1})$, $\{s_{i}: v \in V(T'_{i})\}$ is independent and $|\{s_{i}: v \in V(T'_{i})  \}|= r_{M}(S_{W(v)}) $.
Equivalently, $E(F_{1})$ can be oriented to $A_{1}$ such that there exist pairwise arc disjoint $\pi(s_{i})$-arborescences $T_{i}$ in $D_{1} :=(V(F_{1}), A(V(F_{1})) \cup A_{1})$, where $1 \leq i \leq |S_{V(F_{1})}|$;  and for any $v \in V(F_{1})$, $\{s_{i}: v \in V(T_{i})\}$ is independent and $|\{s_{i}: v \in V(T_{i})  \}|= r_{M}(S_{W(v)})$.

By Theorem~\ref{1}, for any $\emptyset \neq X_{0} \subseteq V(F_{1})$,
\begin{equation} \label{22}
d_{D_{1}}^{-}(X_{0}) \geq r_{M}(S_{W_{D_{1}}(X_{0})})-r_{M}(S_{X_{0}}).
\end{equation}


Note that if $v \in V(T_{i})$, then $\pi(s_{i}) \in W_{D_{1}}(v)$. Thus $\{s_{i}: v \in V(T_{i})  \} \subseteq S_{W_{D_{1}}(v)}$, and $|\{s_{i}: v \in V(T_{i})  \} | \leq r_{M}(S_{W_{D_{1}}(v)})$. Since $W_{D_{1}}(v) \subseteq W(v)$,
we have $r_{M}(S_{W_{D_{1}}(v)}) \leq r_{M}(S_{W(v)})$.
Since $|\{s_{i}: v \in V(T_{i})  \}|= r_{M}(S_{W(v)})$,  we have $ r_{M}(S_{W_{D_{1}}(v)})= r_{M}(S_{W(v)})$. 
Thus $r_{M}(S_{W_{D_{1}}(X_{0})})= r_{M}(S_{W(X_{0})})$. 
And $E(V(C_0), V(F_{1}))= A(V(C_0), V(F_{1})) =\emptyset$ gives that $d_{D_{1}}^{-}(X_{0})= d_{A \cup A_{1}}^{-}(X_{0})$.   So  (\ref{22}) can be transformed to:  
\begin{equation}\label{11}
d_{A \cup A_{1}}^{-}(X_{0}) \geq r_{M}(S_{W(X_{0})})-r_{M}(S_{X_{0}}).
\end{equation}

%

Define $f_{C_{0}}: 2^{V(C_{0})} \setminus \{\emptyset \} \rightarrow Z$, $f_{C_{0}}(X)=\max \{r_{M}(S_{W(V(C_{0}))})-r_{M}(S_{X_{0}})- d_{A}^{-}(X_{0}): X \subseteq X_{0}$ and $ X_{0} \setminus X = W(Y)$ for some $Y \subseteq W(V(C_{0})) \setminus V(C_{0}) \} $. Then we have the following claim.
\begin{claim}\label{12}
$f_{C_{0}}$ is intersecting supermodular.
\end{claim}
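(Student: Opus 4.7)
The plan is a standard uncrossing argument. Let $X_1, X_2 \in 2^{V(C_0)} \setminus \{\emptyset\}$ be intersecting (so $X_1 \cap X_2 \neq \emptyset$), and pick optimal witnesses $X_i^\ast = X_i \cup W(Y_i)$ with $Y_i \subseteq W(V(C_0)) \setminus V(C_0)$ achieving $f_{C_0}(X_i) = r_M(S_{W(V(C_0))}) - r_M(S_{X_i^\ast}) - d_A^-(X_i^\ast)$. I would then crucially invoke the choice of $C_0$ in the induction step: since no arcs leave $C_0$, no vertex of $V(C_0)$ reaches any vertex outside $V(C_0)$, so for every $Y \subseteq V \setminus V(C_0)$ one has $W(Y) \cap V(C_0) = \emptyset$. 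Applied to $Y_i$, this decomposes $X_i^\ast$ as the disjoint union of $X_i \subseteq V(C_0)$ and $W(Y_i) \subseteq V \setminus V(C_0)$, and a direct calculation gives $X_1^\ast \cap X_2^\ast = (X_1 \cap X_2) \cup (W(Y_1) \cap W(Y_2))$ and $X_1^\ast \cup X_2^\ast = (X_1 \cup X_2) \cup (W(Y_1) \cup W(Y_2))$.

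The main obstacle is to verify that $X_1^\ast \cap X_2^\ast$ and $X_1^\ast \cup X_2^\ast$ are themselves admissible witnesses for $f_{C_0}(X_1 \cap X_2)$ and $f_{C_0}(X_1 \cup X_2)$, that is, that their ``overhangs'' outside $V(C_0)$ have the form $W(Z)$ for some $Z \subseteq W(V(C_0)) \setminus V(C_0)$. The union case is easy: take $Y_\cup := Y_1 \cup Y_2$ and observe $W(Y_\cup) = W(Y_1) \cup W(Y_2)$ directly from the definition. The intersection case is subtle, because $W(Y_1) \cap W(Y_2)$ need not equal $W(Y_1 \cap Y_2)$; my fix is to take $Z := W(Y_1) \cap W(Y_2)$ itself as the witness set. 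The identity $W(Z) = Z$ follows from the \emph{predecessor-closure} of any $W$-set: if $u \to v$ and $v$ reaches both $Y_1$ and $Y_2$, then so does $u$. To see $Z \subseteq W(V(C_0)) \setminus V(C_0)$, the sink property of $C_0$ yields $Z \cap V(C_0) = \emptyset$, and the fact that every vertex of $W(Y_i)$ reaches $Y_i \subseteq W(V(C_0))$, hence $V(C_0)$, yields $Z \subseteq W(V(C_0))$.

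With both sides in admissible form, the proof concludes by invoking the supermodularity of
\[
g(X) \;:=\; r_M(S_{W(V(C_0))}) - r_M(S_X) - d_A^-(X)
\]
on all of $2^V$. This supermodularity holds because $r_M(S_\cdot)$ is submodular (since $X \mapsto S_X = \pi^{-1}(X)$ commutes with $\cap$ and $\cup$, and $r_M$ is submodular) and $d_A^-$ is submodular. Chaining the optimality of $X_i^\ast$, supermodularity of $g$, and admissibility of the uncrossed sets gives
\[
f_{C_0}(X_1) + f_{C_0}(X_2) = g(X_1^\ast) + g(X_2^\ast) \leq g(X_1^\ast \cap X_2^\ast) + g(X_1^\ast \cup X_2^\ast) \leq f_{C_0}(X_1 \cap X_2) + f_{C_0}(X_1 \cup X_2),
\]
which is precisely the intersecting supermodularity of $f_{C_0}$.
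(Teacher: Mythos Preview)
Your proof is correct and follows essentially the same uncrossing argument as the paper: both pick optimal witnesses $X_i \cup W(Y_i)$, use $W(Y_1)\cup W(Y_2)=W(Y_1\cup Y_2)$ for the union side, take $Z=W(Y_1)\cap W(Y_2)$ (the paper calls it $Y_3$) with $W(Z)=Z$ for the intersection side, and then apply submodularity of $r_M$ and $d_A^-$. If anything, your write-up is more careful than the paper's, since you explicitly justify $W(Z)=Z$ via predecessor-closure and verify $Z\subseteq W(V(C_0))\setminus V(C_0)$ using the sink property of $C_0$, points the paper leaves implicit.
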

\begin{proof}
Suppose $X_{1}, X_{2} \subseteq V(C_{0})$ are intersecting sets, $Y_{1}, Y_{2} \subseteq W(V(C_{0}))\setminus V(C_{0})$ such that $f_{C_{0}}(X_{i})= r_{M}(S_{W(V(C_{0}))})- r_{M}(S_{X_{i} \cup W(Y_{i})}) - d_{A}^{-}(X_{i} \cup W(Y_{i}))$ for some $Y_{i} \subseteq W(V(C_{0})) \setminus V(C_{0})$, where $i=1,2$.

Note that $W(Y_{1}) \cup W(Y_{2})= W(Y_{1} \cup Y_{2})$. Let $Y_{3}=W(Y_{1}) \cap W(Y_{2})$, note that $W(Y_{3})=Y_{3}$; thus $W(Y_{1}) \cap W(Y_{2})=W(Y_{3})$. Since $r_{M}$ and $d_{A}^{-}$ are submodular,
\[
\begin{split}
& f_{C_{0}}(X_{1}) + f_{C_{0}}(X_{2}) =  r_{M}(S_{W(V(C_{0}))})- r_{M}(S_{X_{1} \cup W(Y_{1})}) - d_{A}^{-}(X_{1} \cup W(Y_{1}))\\
& ~~~~~~~~~~~~~~~~ ~~~~~~~~~+  r_{M}(S_{W(V(C_{0}))})- r_{M}(S_{X_{2} \cup W(Y_{2})}) - d_{A}^{-}(X_{2} \cup W(Y_{2}))\\
 & ~~~~~~~~~~~~ ~~~ \leq  r_{M}(S_{W(V(C_{0}))})-r_{M}(S_{X_{1} \cup X_{2} \cup W(Y_{1} \cup Y_{2})}) -d_{A}^{-}(X_{1} \cup X_{2} \cup W(Y_{1} \cup Y_{2})) \\
&~~~~~~~~~~~~ ~~~~ + r_{M}(S_{W(V(C_{0}))})-r_{M}(S_{(X_{1} \cap X_{2}) \cup W(Y_{3})})-d_{A}^{-}((X_{1} \cap X_{2}) \cup W(Y_{3})) \\
& ~~~~~~~~~~~~~ ~~ \leq   f_{C_{0}}(X_{1}\cup X_{2}) + f_{C_{0}}(X_{1} \cap X_{2}).
\end{split}
\]
\end{proof}

Using Claim~\ref{12} and (\ref{20}), by  Theorem~\ref{7}, we know that there exists an orientation  $A_{0}$ of $E(C_{0})$ such that $A_{0}$ covers $f_{C_{0}}$, i.e.,  for any $\emptyset \neq X \subseteq V(C_{0})$ and $X_{0} $ such that $X \subseteq X_{0}$ and $ X_{0} \setminus X =W(Y)$ for some $Y \subseteq W(V(C_{0})) \setminus V(C_{0})$,
\[
d_{A_{0}}^{-}(X) \geq r_{M}(S_{W(V(C_{0}))})-r_{M}(S_{X_{0}})-d_{A}^{-}(X_{0}).
\]

Similarly to (\ref{eq-OI}), we have $d_{A_{0}}^{-}(X)=d_{A_{0}}^{-}(X_{0})$.
Then for each $ X_{0} \subseteq W(V(C_{0}))$ such that $X := X_{0} \cap V(C_{0}) \neq \emptyset$  and $W(X_{0} \setminus V(C_{0}))=X_{0} \setminus V(C_{0})$, we have
\begin{equation}\label{10}
d_{A \cup A_{0}}^{-}(X_{0}) = d_{A_{0}}^{-}(X)  + d_{A}^{-}(X_{0}) \geq  r_{M}(S_{W(V(C_{0}))})-r_{M}(S_{X_{0}}).
\end{equation}


Using orientation $A_1$ of   $E(F_{1})$ and $A_{0}$ of   $E(C_{0})$, we have a directed graph $D$ of $F$ with arc set   $A \cup A_{0} \cup A_{1}$.


\begin{lemma} \label{14-L}
Suppose $v \in V(C_{0})$ and $v \in X_{0} \subseteq W(v)$. Then we have
\begin{equation} \label{13}
d^{-}_{A \cup A_{0} \cup A_{1}}(X_{0}) \geq r_{M}(S_{W(X_{0})})-r_{M}(S_{X_{0}}).
\end{equation}
\end{lemma}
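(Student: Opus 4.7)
The plan is to verify (\ref{13}) by splitting $X_{0}$ into its $C_{0}$-part and its $F_{1}$-part and then combining the two in-degree bounds (\ref{10}) and (\ref{11}) coming from the two oriented halves of $F$. Set $X := X_{0} \cap V(C_{0})$ and $Y_{0} := X_{0} \setminus V(C_{0})$; since $v \in V(C_{0}) \cap X_{0}$ the set $X$ is nonempty. Because $v \in V(C_{0})$ with $C_{0}$ strongly connected, $V(C_{0}) \subseteq W(v)$, and combined with $X_{0} \subseteq W(v)$, monotonicity, and idempotence of $W$, this forces $W(X_{0}) = W(v) = W(V(C_{0}))$. So (\ref{13}) reduces to showing $d^{-}_{A \cup A_{0} \cup A_{1}}(X_{0}) \geq r_{M}(S_{W(V(C_{0}))}) - r_{M}(S_{X_{0}})$.

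If $Y_{0} = \emptyset$, the bound follows immediately from (\ref{10}) applied to $X_{0}$ (with $Y = \emptyset$), since $A_{1}$ lies inside $V(F_{1})$ and contributes nothing. In the main case $Y_{0} \neq \emptyset$, the central observation is that $W(Y_{0}) \subseteq V(F_{1}) \cap (W(V(C_{0})) \setminus V(C_{0}))$: no arc leaves $V(C_{0})$, so $W(Y_{0})$ avoids $V(C_{0})$, while any vertex reaching $Y_{0} \subseteq W(v)$ also reaches $v$. Together with $W(W(Y_{0})) = W(Y_{0})$, this lets us apply (\ref{10}) to the enlarged set $\widetilde{X}_{0} := X \cup W(Y_{0})$ (with $Y = Y_{0}$) and apply (\ref{11}) to $Y_{0}$:
\[
d^{-}_{A}(\widetilde{X}_{0}) + d^{-}_{A_{0}}(X) \geq r_{M}(S_{W(V(C_{0}))}) - r_{M}(S_{\widetilde{X}_{0}}),
\]
\[
d^{-}_{A}(Y_{0}) + d^{-}_{A_{1}}(Y_{0}) \geq r_{M}(S_{W(Y_{0})}) - r_{M}(S_{Y_{0}}).
\]

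The main step is then to reconcile the $A$-arcs. Put $Z := W(Y_{0}) \setminus Y_{0} = \widetilde{X}_{0} \setminus X_{0}$. Because any $A$-arc with head in $W(Y_{0})$ has its tail inside $W(Y_{0}) \subseteq \widetilde{X}_{0}$, no $A$-arc enters $Z$ from outside $\widetilde{X}_{0}$, and likewise $d^{-}_{A}(Y_{0}) = |A(Z,Y_{0})|$; a direct count gives $d^{-}_{A}(X_{0}) = d^{-}_{A}(\widetilde{X}_{0}) + |A(Z,X)| + |A(Z,Y_{0})| \geq d^{-}_{A}(\widetilde{X}_{0}) + d^{-}_{A}(Y_{0})$. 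Combining this with $d^{-}_{A_{0}}(X_{0}) = d^{-}_{A_{0}}(X)$, $d^{-}_{A_{1}}(X_{0}) = d^{-}_{A_{1}}(Y_{0})$, and the two displayed inequalities yields
\[
d^{-}_{A \cup A_{0} \cup A_{1}}(X_{0}) \geq r_{M}(S_{W(V(C_{0}))}) - r_{M}(S_{\widetilde{X}_{0}}) + r_{M}(S_{W(Y_{0})}) - r_{M}(S_{Y_{0}}).
\]
The final move is submodularity of $U \mapsto r_{M}(S_{U})$ applied to $X_{0}$ and $W(Y_{0})$: thanks to $X \cap W(Y_{0}) = \emptyset$, their union is $\widetilde{X}_{0}$ and their intersection is $Y_{0}$, so $r_{M}(S_{X_{0}}) + r_{M}(S_{W(Y_{0})}) \geq r_{M}(S_{\widetilde{X}_{0}}) + r_{M}(S_{Y_{0}})$, which absorbs the extra rank terms and delivers (\ref{13}). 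The trickiest moment will be the arc-count reconciliation combined with checking that $\widetilde{X}_{0}$ satisfies the precise hypotheses of (\ref{10}); the rest is routine bookkeeping with submodularity.
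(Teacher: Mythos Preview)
Your proposal is correct and follows essentially the same route as the paper: split $X_{0}$ into $X = X_{0}\cap V(C_{0})$ and $Y = X_{0}\setminus V(C_{0})$, apply (\ref{10}) to $X\cup W(Y)$ and (\ref{11}) to $Y$, and combine via the pair $(X_{0},W(Y))$ using submodularity. The only cosmetic difference is that the paper invokes submodularity of $d^{-}_{A\cup A_{0}\cup A_{1}}$ together with that of $r_{M}(S_{\cdot})$ in one stroke (and then uses $d^{-}(W(Y))=0$), whereas you do the in-degree part by explicit arc counting and reserve submodularity for the rank function; the two computations are equivalent.
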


\begin{proof}
Since $v \in V(C_{0})$, then $X_{0} \subseteq W(v) =  W(V(C_{0}))$, and $v \in X_{0} \cap V(C_{0}) \neq \emptyset.$
By (\ref{10}), it suffices to consider the case where $Y := X_{0} \setminus V(C_{0})$ and $Y \subsetneqq W(Y)$.


Since $Y \subseteq V(F_1)$, as noted before, $W(Y) \subseteq V(F_1)$. 
Then $X_{0} \cap W(Y) \subseteq X_0 \cap \overline{V(C_0)} \subseteq Y \subseteq X_{0} \cap W(Y)$, this gives $X_{0} \cap W(Y)=Y$.
Let $X :=X_{0} \cap V(C_{0})$, then $X_{0} \cup W(Y)=X \cup W(Y)$.
Combining that $r_{M}$ and $d^{-}_{A \cup A_{0} \cup A_{1}}$ are submodular,  we have
\begin{equation}\label{in-final}
\begin{split}
&(r_{M}(S_{W(X_{0})})-r_{M}(S_{X_{0}}) - d^{-}_{A \cup A_{0} \cup A_{1}}(X_{0}))  \\
 + ~ & (r_{M}(S_{W(Y)})-r_{M}(S_{W(Y)}) - d^{-}_{A \cup A_{0} \cup A_{1}}(W(Y)))\\
\leq ~ & (r_{M}(S_{W(X_{0})})-r_{M}(S_{X \cup W(Y)}) - d^{-}_{A \cup A_{0} \cup A_{1}}(X \cup W(Y))) \\
 + ~  &  (r_{M}(S_{W(Y)}) -r_{M}(S_{Y})- d^{-}_{A \cup A_{0} \cup A_{1}}(Y)).
\end{split}
\end{equation}

Note that $W(X_{0})=W(V(C_{0})) $, 
apply (\ref{10}) to $X \cup W(Y)$, we have 
$$ r_{M}(S_{W(X_{0})})-r_{M}(S_{X \cup W(Y)}) - d^{-}_{A \cup A_{0} \cup A_{1}}(X \cup W(Y)) \leq 0;$$
apply (\ref{11}) to $Y$, we have:  ~~~~ 
$r_{M}(S_{W(Y)}) -r_{M}(S_{Y})  - d^{-}_{A \cup A_{0} \cup A_{1}}(Y) \leq 0;$ \\
notice that:  $d^{-}_{A \cup A_{0} \cup A_{1}}(W(Y))=0$;   \\
thus (\ref{in-final}) gives  
$r_{M}(S_{W(X_{0})})-r_{M}(S_{X_{0}}) - d^{-}_{A \cup A_{0} \cup A_{1}}(X_{0}) \leq 0$, this proves the lemma. 
\end{proof}

We are ready to show $(iii)$ $\Rightarrow$ $(i)$ by applying Lemma~\ref{6}: Suppose $X_{0} \subseteq V(F)$, and for some $v \in V(F)$, $v \in X_{0} \subseteq W(v)$.
Note that if $v \in  V(F_{1})$, then $W(v) \subseteq V(F_{1})$.
 If $X_{0} \subseteq V(F_{1})$, then by (\ref{11}), (\ref{2}) holds. Else $X_{0} \cap V(C_{0}) \neq \emptyset$, in this case, if vertices $v \in X_{0} \subseteq W(v)$, then $v \in V(C_{0})$; by Lemma \ref{14-L}, (\ref{2}) holds. By Lemma~\ref{6}, there exists
 a maximal $M$-independent packing of mixed arborescences in $(F=(V; E,A), M,S,\pi)$, this finishes the proof.  \QEDA

\smallskip

\noindent {\bf Remarks on the complexity:} 
Frank~\cite{frankon} showed that the problem of covering an intersecting supermodular function by orienting edges can be solved in polynomial time. Hence, we can orient all strong components $C$ of $F$ such that the obtained digraph $D$ covers $f_{C}$ in polynomial time. Then, according to the polynomial-time algorithm given in \cite{kiraly-16}, a maximal $M$-independent packing of mixed arborescences in $F$ can be found in polynomial time.

\end{document}